\newtheorem{thm}{Theorem}[section]
 \newtheorem{cor}[thm]{Corollary}
 \newtheorem{prop}[thm]{Proposition}
 \theoremstyle{definition}
 \newtheorem{defn}[thm]{Definition}
 \theoremstyle{remark}
 \numberwithin{equation}{section}
\newcommand{\ie}{i.e.}
\newcommand{\f}{\phi}
\newcommand{\tg}{\tilde{g}}
\newcommand{\n}{\nabla}
\newcommand{\M}{(\mathcal{M},\allowbreak{}\f,\allowbreak{}\xi,\allowbreak{}\eta,g)}
\newcommand{\R}{\mathbb R}
\newcommand{\X}{\mathfrak X}
\newcommand{\F}{\mathcal{F}}
\newcommand{\HH}{\mathcal{H}}
\newcommand{\MM}{\mathcal{M}}
\newcommand{\LL}{\mathfrak{L}}
\newcommand{\LLL}{\mathcal{L}}
\newcommand{\hatN}{\widehat{N}}
\newcommand{\tr}{{\rm tr}}
\newcommand{\ta}{\theta}
\newcommand{\ze}{\zeta}
\newcommand{\om}{\omega}
\newcommand{\lm}{\lambda}
\newcommand{\D}{\mathrm{d}}
\newcommand{\thmref}[1]{Theorem~\ref{#1}}
\newcommand{\cororref}[1]{Corollary~\ref{#1}}
\newcommand{\propref}[1]{Proposition~\ref{#1}}
\newcommand{\tablref}[1]{Table~\ref{#1}}
\title{On Almost Paracontact Almost Paracomplex Riemannian Manifolds}
\author{Mancho H. Manev and Veselina R. Tavkova}
\begin{document}

\maketitle

 %%% Insert a brief summary (50-150 words)
\begin{abstract}
 Almost paracontact manifolds of odd dimension having
an almost paracomplex structure on the paracontact distribution are studied.
The components of the fundamental (0,3)-tensor, derived by the covariant
derivative of the structure endomorphism and the metric
on the considered manifolds in each of the basic classes, are obtained.
Then, the case of the lowest dimension 3 of these manifolds is considered.
An associated tensor of the Nijenhuis tensor is introduced and the studied manifolds
are characterized with respect to this pair of tensors.
Moreover, a cases of paracontact and para-Sasakian types are commented.
A family of examples is given.

2010 Mathematics Subject Classification: Primary 53C15; Secondary 53C25.
\end{abstract}

\section{Introduction}\label{sec-intro}
 \vglue-10pt
 \indent

In 1976, on a differentiable manifold of arbitrary dimension,  I. Sato introduced  in
\cite{Sato76} the concept of (almost) paracontact structure compatible with a
Riemannian metric as an analogue of almost contact Riemannian manifold. Then, he
studied several properties of the considered manifolds. Later, a lot of geometers
develop the differential geometry of these manifolds and in particular of paracontact
Riemannian manifolds and para-Sasakian manifolds. In the beginning are the papers
\cite{Sato77}, \cite{AdatMiya77}, \cite{Sato78}, \cite{SatoMats79} and \cite{Sa80}
by I. Sato, T. Adati, T. Miyazawa, K. Matsumoto and S. Sasaki. \footnote{The authors were supported in part by  project FP17-FMI-008
of the Scientific Research Fund, University of Plovdiv, Bulgaria.}

On an almost paracontact manifold can be considered two kinds of metrics compatible
with the almost paracontact structure.
If the structure endomorphism induces an isometry on the paracontact distribution of
each tangent fibre, then the manifold has an almost paracontact Riemannian structure
as in the papers mentioned above.
In the case when the induced transformation is antiisometry, then the manifold has a
structure of an almost paracontact metric manifold, where the metric is
semi-Riemannian of type $(n+1,n)$. This case is studied by many geometers, see for
example the papers \cite{NakZam}, \cite{ZamNak} of S. Zamkovoy and G. Nakova.

In 2001, M. Manev and M. Staikova give a classification in \cite{ManSta01}
of almost paracontact Riemannian manifold of type $(n,n)$
according to the notion given by Sasaki in \cite{Sa80}.
These manifolds are of dimension $2n+1$ and
the induced almost product structure on the paracontact distribution is traceless,
i.e. it is an almost paracomplex structure.

In the present paper, we continue investigations on these manifolds.
The paper is organized as follows.
In Sect.~\ref{sec-mfds}, we
recall some facts about the almost paracontact Riemannian manifolds
of the considered type and we make some additional comments.
In Sect.~\ref{sec-3dim}, we reduce the basic classes
of the considered manifolds in the case of the lowest dimension 3.
In Sect.~\ref{sec-PRM} and Sect.~\ref{sec-TNT}, we find the class of paracontact type
and the class of normal type of the manifolds studied, respectively, and we obtain
some related properties.
In Sect.~\ref{sec-ANT}, we introduce an associated Nijenhuis tensor and we discuss
relevant problems.
In Sect.~\ref{sec-FNhaN}, we argue that the classes of the considered manifolds can be
determined only by the pair of Nijenhuis tensors.
Finally, in Sect.~\ref{sec-exm}, we construct a family of Lie groups as examples of
the manifolds of the studied type and we characterize them in relation with the above
investigations.

%%%%%%%%%%%%%%%%%%%%%%%%%%%%%%%%%%%%%%%%%%

 \section{Almost paracontact almost paracomplex Riemannian manifolds}\label{sec-mfds}
 \vglue-10pt
 \indent

Let $(\MM,\f,\xi,\eta)$ be an \emph{almost paracontact manifold}, \ie{}   $\MM$
is an $m$-dimensional real differentiable manifold with an \emph{almost
paracontact structure} $(\f,\xi,\eta)$ if it admits a tensor field
 $\f$  of type $(1,1)$ of the tangent bundle, a vector field $\xi$ and a 1-form
 $\eta$, satisfying the following conditions:
\begin{equation}\label{str}
\begin{array}{c}
\f\xi = 0,\quad \f^2 = I - \eta \otimes \xi,\quad
\eta\circ\f=0,\quad \eta(\xi)=1,
\end{array}
\end{equation}
where $I$ is the identity on the tangent bundle \cite{Sato76}.

In \cite{Sa80}, it is considered the so-called almost paracontact manifold of type
$(p, q)$, where $p$ and $q$ are the numbers of the multiplicity of
the $\f$'s eigenvalues $+1$ and $-1$, respectively. Moreover, $\f$ has a simple
eigenvalue $0$. Therefore, we have $\tr\f=p-q$.

%In \cite{Sato76}, the notion of an almost paracontact manifold is introduced for
arbitrary dimension.
%We specialize the dimension of $\MM$ to be odd in relation with our considerations.

Let us recall that an \emph{almost product structure} $P$ on an differentiable
manifold of arbitrary dimension $m$ is an endomorphism on the manifold such that
$P^2=I$. %, where $I$ denotes the identity on the manifold.
Then a manifold with such a structure is called an \emph{almost product manifold}.
In the case when the eigenvalues $+1$ and $-1$ of $P$ have one and the same
multiplicity $n$, the structure $P$ is called an \emph{almost paracomplex structure}
and the manifold is known as an \emph{almost paracomplex manifold} of dimension $2n$
\cite{CrFoGa96}. Then $\tr P=0$ follows.

Further we consider the case when the dimension of $\MM$ is $m=2n+1$. Then
%Let us remark that
$\HH=\ker(\eta)$ is the $2n$-dimensional paracontact distribution of the tangent
bundle of
$(\MM,\f,\xi,\eta)$, the endomorphism $\f$ acts as an almost paracomplex structure on
each fiber of $\HH$ and the pair $(\HH,\f)$ induces a $2n$-dimensional  almost
paracomplex manifold. Then we give the following
\begin{defn}
A $(2n+1)$-dimensional differentiable manifold with a structure $(\f,\xi,\eta)$
defined by  (\ref{str}) and $\tr\f=0$ is called \textit{almost paracontact almost
paracomplex manifold}. We denote it by  $(\MM,\f,\xi,\eta)$.
\end{defn}

%Then the tangent space $T_p\MM$ at each point $p\in \MM$ is the following orthogonal
direct sum
%\[
%T_p\MM=\HH_p\oplus \VV_p,
%\]
%where $\VV_p=\Span_\mathbb R\{\xi (p)\}$. Then every vector $x\in T_p\MM$ can be
decomposed uniquely in the form
%\begin{equation}\label{H}
%x=hx+vx,
%\end{equation}
%where $hx=\varphi ^2x \in \HH_p$ and $vx=\eta (x).\xi (p) \in \VV_p$. We have the
following conditions
%\[
%h\xi =0, \quad h^2=h, \quad h\circ \phi =\phi \circ h=\phi ,\quad v\circ h=h\circ v
=0.
%\]

Now we can introduce a metric on the considered manifold.
It is known from \cite{Sato76} that $\MM$ admits a Riemannian metric $g$ which is
compatible with the structure of the manifold by the following way:
\begin{equation}\label{g}
g(\f x,\f y)= g(x,y)-\eta (x)\eta (y),\quad g( x,\xi)= \eta (x).
\end{equation}
Here and further $x$, $y$, $z$ will stand for arbitrary
elements of the Lie algebra $\X(\MM)$ of tangent vector fields on $\MM$ or vectors in
the tangent space $T_p\MM$ at
$p\in \MM$.

In \cite{Sato77}, an almost paracontact manifold of arbitrary dimension with a
Riemannian metric $g$ defined by (\ref{g}) is called an \emph{almost paracontact
Riemannian manifold}.

It is easy to conclude that the requirement for a positive definiteness of the metric
is not necessary, i.e  $g$ can be a pseudo-Riemannian metric. % (see e.g. \cite{?}).
Then, since $g(\xi,\xi)=1$ follows from (\ref{str}) and (\ref{g}), the signature of
$g$ has the form $(2k+1,2n-2k)$, $k<n$. Since the signature of the metric is not
crucial for our considerations, we suppose that $g$ is Riemannian.

\begin{defn}
Let the manifold $(\MM,\f,\xi,\eta)$ be equipped with a Riemannian metric $g$
satisfying (\ref{g}). Then $(\MM,\f,\xi,\eta,g)$ is called an \textit{almost
paracontact almost paracomplex Riemannian manifold}.
\end{defn}

The decomposition $x=\f^2x+\eta(x)\xi$ due to  (\ref{str}) generates the projectors
$h$ and $v$ on any tangent space of $(\MM,\f,\xi,\eta)$. These projectors are
determined by $hx=\f^2x$ and $vx=\eta(x)\xi$ and have the properties $h\circ h =h$,
$v\circ
v=v$, ${h\circ v=\allowbreak{}v\circ h=0}$. Therefore, we have the
orthogonal decomposition $T_p\MM=h(T_p\MM)\oplus v(T_p\MM)$. Obviously, it generates
the corresponding orthogonal decomposition of the space $\mathcal{S}$ of the
tensors $S$ of type (0,2) over $(\MM,\f,\xi,\eta)$.
This decomposition is invariant with respect to transformations preserving the
structures of the manifold.
Hereof, we use the following
linear operators in $\mathcal{S}$:
\begin{equation}\label{ell}
\begin{array}{l}
\ell_1(S)(x,y)=S(hx,hy),\qquad \ell_2(S)(x,y)=S(vx, vy),\\[4pt]
\ell_3(S)(x,y)=S(vx, hy)+S(hx, vy).
\end{array}
\end{equation}
Namely, we have the following decomposition:
\[
\mathcal{S}=\ell_1(\mathcal{S})\oplus \ell_2(\mathcal{S})
\oplus \ell_3(\mathcal{S}),%\ell_2 е w   %\ell_3 е v
\qquad
\ell_i(\mathcal{S})=\left\{S\in \mathcal{S}\ |\
S=\ell_i(S)\right\},\quad i=1,2,3.
\]
The associated metric $\tg$ of $g$ on $\M$ is defined by
$\tg(x,y)=g(x,\f y)+\eta(x)\eta(y)$. It is shown that $\tg$ is a compatible metric
with $(\MM,\f,\xi,\eta)$ and it is a pseudo-Riemannian metric of signature $(n + 1,
n)$. Therefore,
$(\MM,\f,\xi,\eta,\tg)$ is also an almost paracontact almost paracomplex manifold but
with a pseudo-Riemannian metric.

Since the metrics $g$ and $\tg$ belong to $\mathcal{S}$, then
they have corresponding components in the three orthogonal subspaces introduced above
and we get them in the following form:
\[
\begin{array}{lll}
\ell_1(g)=g(\f\cdot,\f\cdot)=g-\eta\otimes\eta, \quad
&\ell_2(g)=\eta\otimes\eta,
\quad &\ell_3(g)=0,\\[0pt]
\ell_1(\tg)=g(\cdot,\f\cdot)=\tg-\eta\otimes\eta, \quad
&\ell_2(\tg)=\eta\otimes\eta, \quad &\ell_3(\tg)=0.
\end{array}
\]

In the final part of the present section we recall the needed notions and results from
\cite{ManSta01}.

In the cited paper, the manifolds under study are called almost
paracontact Riemannian manifolds of type $(n,n)$.
The structure group of $\M$ is $\mathcal{O}(n)\times\mathcal{O}(n)\times 1$, where
$\mathcal{O}(n)$ is the group of the orthogonal matrices of size $n$.

%The Levi-Civita connection of $g$ and $\tg$ will be denoted by
%$\n$ and $\nn$, respectively.

The tensor  $F$ of type (0,3) plays a fundamental role in differential geometry of the
considered manifolds. It is defined by:
\begin{equation}\label{F=nfi}
F(x,y,z)=g\bigl( \left( \nabla_x \f \right)y,z\bigr),
\end{equation}
where $\nabla $ is the Levi-Civita connection of $g$.
The basic properties of $F$ with respect to the structure are the following:
\begin{equation}\label{F-prop}
\begin{array}{l}
F(x,y,z)=F(x,z,y)\\
\phantom{F(x,y,z)}=-F(x,\f y,\f z)+\eta(y)F(x,\xi,z)
+\eta(z)F(x,y,\xi).
\end{array}
\end{equation}
The relations of $\nabla \xi$ and $\nabla \eta$ with $F$ are:
\begin{equation}\label{n_eta_F}
(\nabla_x \eta)y=g\left( \nabla_x \xi, y \right)=-F(x,\f y,\xi).
\end{equation}

If $\left\{\xi;e_i\right\}$ $(i=1,2,\dots,2n)$ is a basis of the tangent space
$T_p\MM$ at an arbitrary point $p\in \MM$ and $\left(g^{ij}\right)$ is the inverse
matrix of the
matrix $\left(g_{ij}\right)$ of $g$, then the following 1-forms
are associated with $F$:
\begin{equation}\label{t}
\theta(z)=g^{ij}F(e_i,e_j,z),\quad
\theta^*(z)=g^{ij}F(e_i,\f e_j,z), \quad \omega(z)=F(\xi,\xi,z).
\end{equation}
These 1-forms are known also as the Lee forms of the considered manifolds. Obviously,
the identities
$\om(\xi)=0$ and $\ta^*\circ\f=-\ta\circ\f^2$ are always valid.

There, it is made a classification of the almost paracontact almost paracomplex
Riemannian manifolds with respect to $F$.
The vector space $\mathbb{F}$ of all tensors $F$ with the properties (\ref{F-prop}) is
decomposed into 11 subspaces $\mathbb{F}_{i}$ $(i=1,2,\dots,11)$, which are orthogonal
and invariant with respect to the structure group of the considered manifolds. This
decomposition induces a classification of the manifolds under study.
An almost paracontact almost paracomplex Riemannian manifold is said to be in the
class $\F_{i}$ $(i=1,2,\dots,11)$, or briefly an $\F_{i}$-manifold, if the tensor $F$
belongs to the subspace $\mathbb{F}_{i}$.
Such a way, it is obtained that this classification consists of 11 basic
classes $\F_1$, $\F_2$, $\dots$, $\F_{11}$.
The intersection of the basic classes is the special class $\F_0$
determined by the condition $F(x,y,z)=0$. Hence $\F_0$ is the
class of the considered manifolds with $\n$-parallel
structures, i.e. $\n\f=\n\xi=\n\eta=\n g=\n \tg=0$.

Moreover, it is given the conditions for $F$ determining the basic classes $\F_{i}$ of
$\M$ and the components of $F$ corresponding to $\F_{i}$.
It is said that $\M$ belongs to the class $\F_{i}$ $(i=1,2,\dots,11)$
if and only if the equality $F=F_i$ is valid. In the last expression, $F_{i}$ are the
components of $F$ in the subspaces $\mathbb{F}_{i}$ and they are given by the
following equalities:

\begin{subequations}\label{F1-11}
\begin{equation}
\begin{split}
&F_{1}(x,y,z)=\frac{1}{2n}\bigl\{g(\f x,\f y)\ta (\phi ^{2}z) +g(\f x,\f z)\ta (\f^{2}y)\\
&\phantom{F_{1}(x,y,z)=\frac{1}{2n}\bigl\{}
-g(x,\f y)\ta (\f z)-g(x,\f z)\ta(\f y)\bigr\}, \\[0pt]
&F_{2}(x,y,z)=\frac{1}{4}\bigl\{ %
2F(\f^2 x,\f^2 y,\f^2 z)+F(\f^2 y,\f^2 z,\f^2 x)+F(\f^2 z,\f^2 x,\f^2 y)\\
&\phantom{F_{2}(x,y,z)=\frac{1}{4}\bigl\{ %
2F(\f^2 x,\f^2 y,\f^2 z)}
-F(\f y,\f z,\f^2 x) % \\[0pt]
%\phantom{F_{2}(x,y,z)=-.} %
-F(\f z,\f y,\f^2 x)\bigr\}\\[0pt]
&\phantom{F_{2}(x,y,z)=}
-\frac{1}{2n}\bigl\{g(\f x,\f y)\ta(\f^2 z) +g(\f x,\f z)\ta(\f^2 y)\\
&\phantom{F_{1}(x,y,z)=\frac{1}{2n}\bigl\{-}
-g(x,\f y)\ta(\f z) -g(x,\f z)\ta(\f y)\bigr\}, \\[0pt]
&F_{3}(x,y,z)=\frac{1}{4}\bigl\{%
2F(\f^2 x,\f^2 y,\f^2 z)-F(\f^2 y,\f^2 z,\f^2 x) -F(\f^2  z,\f^2 x,\f^2 y)\\[0pt]
&\phantom{F_{3}(x,y,z)=\frac{1}{4}\bigl\{2F(\f^2 x,\f^2 y,\f^2 z)}%
 +F(\f y,\f z,\f ^2 x)+F(\f z,\f y,\f ^2 x)\bigr\},
\\[0pt]
&F_{4}(x,y,z)=\frac{\ta(\xi)}{2n}\bigl\{g(\f x,\f y)\eta(z)+g(\f x,\f z)\eta(y)\bigr\}, \\[0pt]
& F_{5}(x,y,z)=\frac{\ta^*(\xi)}{2n}\bigl\{g(x,\f
y)\eta(z)+g(x,\f
z)\eta(y)\bigr\},\\[0pt]
&F_{6}(x,y,z)=\frac{1}{4}\bigl\{ %
[F(\f^2 x,\f^2 y,\xi)+F(\f^2 y,\f^2 x,\xi)+F(\f x,\f y,\xi)\\[0pt]
&\phantom{F_{6}(x,y,z)=\frac{1}{4}\bigl\{ %
[F(\f^2 x,\f^2 y,\xi)+F(\f^2 y,\f^2 x,\xi)}
+F(\f y,\f x,\xi)] \eta(z)\\[0pt]
\end{split}
\end{equation}
\begin{equation}
\begin{split}
&\phantom{F_{6}(x,y,z)=-.}%
+[F(\f^2 x,\f^2 z,\xi)+F(\f^2 z,\f^2 x,\xi)+F(\f x,\f z,\xi)\\[0pt]
&\phantom{F_{6}(x,y,z)=\frac{1}{4}\bigl\{+[F(\f^2 x,\f^2 z,\xi)+F(\f^2 z,\f^2 x,\xi)\,\,}
+F(\f z,\f x,\xi)]\eta(y)\bigr\}\\[0pt]
&\phantom{F_{6}(x,y,z)=}%
-\frac{\ta(\xi)}{2n}\bigl\{g(\f x,\f y)\eta(z)+g(\f
x,\f z)\eta(y)\bigr\}\\[0pt]
&\phantom{F_{6}(x,y,z)=}
-\frac{\ta^*(\xi)}{2n}\bigl\{g(x,\f
y)\eta(z)+g(x,\f
z)\eta(y)\bigr\},\\[0pt]
&F_{7}(x,y,z)=\frac{1}{4}\bigl\{ %
[F(\f^2 x,\f^2 y,\xi)-F(\f^2 y,\f^2 x,\xi)+F(\f x,\f y,\xi)\\[0pt]
&\phantom{F_{7}(x,y,z)=\frac{1}{4}\bigl\{ %
[F(\f^2 x,\f^2 y,\xi)-F(\f^2 y,\f^2 x,\xi)}
-F(\f y,\f x,\xi)]\eta(z)\\[0pt]
&\phantom{F_{7}(x,y,z)=-.} %
+ [F(\f^2 x,\f^2 z,\xi)-F(\f^2 z,\f^2 x,\xi)+F(\f x,\f z,\xi)\\[0pt]
&\phantom{\frac{1}{4}\bigl\{\,\,[F(\f^2 x,\f^2 z,\xi)-F(\f^2 z,\f^2 x,\xi)+F(\f x,\f z,\xi)}
-F(\f z,\f x,\xi)]\eta(y)\bigr\},\\[0pt]
&F_{8}(x,y,z)=\frac{1}{4}\bigl\{ %
[F(\f^2 x,\f^2 y,\xi)+F(\f^2 y,\f^2 x,\xi)-F(\f x,\f y,\xi)\\[0pt]
&\phantom{F_{8}(x,y,z)=\frac{1}{4}\bigl\{ %
[F(\f^2 x,\f^2 y,\xi)-F(\f^2 y,\f^2 x,\xi)}
-F(\f y,\f x,\xi)]\eta(z)\\[0pt]
&\phantom{F_{8}(x,y,z)=-.} %
+ [F(\f^2 x,\f^2 z,\xi)+F(\f^2 z,\f^2 x,\xi)-F(\f x,\f z,\xi)\\[0pt]
&\phantom{F_{8}(x,y,z)=\frac{1}{4}\bigl\{\,\,+[F(\f^2 x,\f^2 z,\xi)-F(\f^2 z,\f^2 x,\xi)}
-F(\f z,\f x,\xi)]\eta(y)\bigr\},\\[0pt]
&F_{9}(x,y,z)=\frac{1}{4}\bigl\{ %
[F(\f^2 x,\f^2 y,\xi)-F(\f^2 y,\f^2 x,\xi)-F(\f x,\f y,\xi)\\[0pt]
&\phantom{F_{9}(x,y,z)=\frac{1}{4}\bigl\{ %
[F(\f^2 x,\f^2 y,\xi)-F(\f^2 y,\f^2 x,\xi)}
+F(\f y,\f x,\xi)]\eta(z)\\[0pt]
&\phantom{F_{9}(x,y,z)=-.} %
+ [F(\f^2 x,\f^2 z,\xi)-F(\f^2 z,\f^2 x,\xi)-F(\f x,\f z,\xi)\\[0pt]
&\phantom{F_{9}(x,y,z)=\frac{1}{4}\bigl\{\,\,+[F(\f^2 x,\f^2 z,\xi)-F(\f^2 z,\f^2 x,\xi)}
+F(\f z,\f x,\xi)]\eta(y)\bigr\},\\[0pt]
&F_{10}(x,y,z)= \eta (x) F(\xi, \f^2 y,\f^2 z),\\[4pt]
&F_{11}(x,y,z)= \eta (x) \bigl\{\eta(y)\om (z) + \eta(z)\om (y)\bigr\}.\\[4pt]
\end{split}
\end{equation}
\end{subequations}

It is easy to conclude that a manifold of the considered type
belongs to a direct sum of two or more basic classes, i.e.
$\M\in\F_i\oplus\F_j\oplus\cdots$, if and only if the fundamental
tensor $F$ on $\M$ is the sum of the corresponding components
$F_i$, $F_j$, $\ldots$ of $F$, i.e. the following condition is
satisfied $F=F_i+F_j+\cdots$.

Finally in this section, we obtain immediately
\begin{prop}\label{prop-dim}
The dimensions of the subspaces $\mathbb{F}_i$ $(i=1,2,\ldots ,11)$ in the
decomposition of the space $\mathbb{F}$ of the tensors $F$ on $\M$ are the following:
\[
\begin{array}{lll}
\dim\mathbb{F}_1=2n, \quad 			& \dim\mathbb{F}_2=n(n-1)(n+2), \quad 	&
\dim\mathbb{F}_3=n^2(n-1),\\[0pt]
\dim\mathbb{F}_4=1, \quad 				& \dim\mathbb{F}_5=1, \quad   							
& \dim\mathbb{F}_6=(n-1)(n+2), \\[0pt]
\dim\mathbb{F}_7=n(n-1), \quad & \dim\mathbb{F}_8=n^2, \quad 						&
\dim\mathbb{F}_9=n^2,  \\[0pt]
\dim\mathbb{F}_{10}=n^2, \quad & \dim\mathbb{F}_{11}=2n.
\end{array}
\]
\end{prop}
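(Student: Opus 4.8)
The plan is to compute the dimension of each subspace $\mathbb{F}_i$ directly from the defining formula for $F_i$ given in \eqref{F1-11}, identifying each class with a space of tensors whose dimension can be counted by elementary linear algebra over the paracontact distribution $\HH$ and its complement. The key structural observation is that the decomposition of the space $\mathcal{S}$ of $(0,2)$-tensors via the operators $\ell_1,\ell_2,\ell_3$ in \eqref{ell} separates the directions along $\HH$ (where $\f$ acts as an almost paracomplex structure, with structure group $\mathcal{O}(n)\times\mathcal{O}(n)$) from the $\xi$-direction. Each class $\mathbb{F}_i$ is a representation space of the structure group $\mathcal{O}(n)\times\mathcal{O}(n)\times 1$, and I would count dimensions by recognizing each $\mathbb{F}_i$ as built from standard pieces: traceless symmetric tensors, the two Lee-form directions, and tensors carrying a factor $\eta(x)$ in the first slot.

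First I would treat the ``easy'' one-dimensional and $2n$-dimensional classes. From the formulas, $F_4$ and $F_5$ are each determined by a single scalar ($\ta(\xi)$ and $\ta^*(\xi)$ respectively), giving $\dim\mathbb{F}_4=\dim\mathbb{F}_5=1$. The class $\mathbb{F}_1$ is governed by the $1$-form $\ta$ restricted to $\HH$, a $2n$-dimensional space, so $\dim\mathbb{F}_1=2n$; similarly $\mathbb{F}_{11}$ is parametrized by $\om$ restricted to $\HH$ (since $\om(\xi)=0$), giving $\dim\mathbb{F}_{11}=2n$. The class $\mathbb{F}_{10}$ consists of tensors $\eta(x)F(\xi,\f^2 y,\f^2 z)$ where the inner $(0,2)$-tensor on $\HH$ is an arbitrary bilinear form with no symmetry imposed, hence $\dim\mathbb{F}_{10}=(2n)\cdot\text{(rank on $\HH\times\HH$)}$; here the counting must be reconciled with the claimed $n^2$, which signals that an additional constraint (a compatibility of the form with $\f$) cuts the naive $(2n)^2$ down.

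The heart of the computation is the classes $\mathbb{F}_2,\mathbb{F}_3,\mathbb{F}_6,\dots,\mathbb{F}_9$, where I would expand the symmetrization/antisymmetrization patterns in $F_i$ and impose the trace-free conditions coming from the subtraction of the $\ta$-terms. For $\mathbb{F}_2$ and $\mathbb{F}_3$ I would decompose the space of tensors $F(\f^2 x,\f^2 y,\f^2 z)$ on $\HH$ (symmetric in the last two slots by \eqref{F-prop}) under the cyclic symmetrization, split off the trace part carried by $\ta$, and decompose further under the almost paracomplex structure into $\pm 1$ eigenspaces, arriving at $n(n-1)(n+2)$ and $n^2(n-1)$. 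The classes $\mathbb{F}_6$–$\mathbb{F}_9$ correspond to the $\xi$-valued part $F(\cdot,\cdot,\xi)$ on $\HH$, which by \eqref{n_eta_F} is $\n\eta$ restricted to $\HH$; I would decompose this $(2n)\times(2n)$ matrix into its symmetric and antisymmetric parts and then into the four blocks relative to the paracomplex eigenspace splitting $\HH=\HH_+\oplus\HH_-$, identifying each $\mathbb{F}_i$ with one such block and subtracting the one-dimensional trace pieces already assigned to $\mathbb{F}_4,\mathbb{F}_5$.

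The main obstacle I expect is bookkeeping the symmetry conditions consistently: tracking which symmetrization pattern (symmetric vs.\ skew in a given pair of slots, paracomplex type $(+,+)$, $(+,-)$, etc.) each $\mathbb{F}_i$ selects, and correctly removing the one- or $2n$-dimensional trace subspaces so as not to double-count with $\mathbb{F}_1,\mathbb{F}_4,\mathbb{F}_5,\mathbb{F}_{11}$. A useful sanity check at the end is that the total $\sum_{i}\dim\mathbb{F}_i$ must equal $\dim\mathbb{F}$, computed independently as the dimension of the space of all $(0,3)$-tensors satisfying \eqref{F-prop}; matching these two counts confirms the individual dimensions and that the decomposition into the eleven classes is indeed a direct-sum decomposition of the full representation.
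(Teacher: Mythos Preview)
Your proposal is correct and follows essentially the same approach as the paper: the paper's proof is a single sentence stating that the dimensions are obtained ``using the characteristic symmetries of the fundamental tensor $F$ and the form of its components from \eqref{F1-11},'' and your plan is a detailed elaboration of precisely that counting argument. Your use of the paracomplex eigenspace splitting $\HH=\HH_+\oplus\HH_-$ and the sanity check via $\sum_i\dim\mathbb{F}_i=\dim\mathbb{F}$ are both sound refinements that the paper omits.
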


\begin{proof}
Using the characteristic symmetries of the fundamental tensor $F$ and the form of its
components from (\ref{F1-11}) in each of ${\F}_i$ $(i=1,2,\ldots ,11)$, we get the
equalities in the statement.
\end{proof}

 \section{The components of the fundamental tensor for dimension 3}\label{sec-3dim}
 \vglue-10pt
 \indent

Let  $(\MM,\f,\xi,\eta,g)$  be the manifold under study with the lowest dimension,
i.e. $\dim{\MM}=3$ (or $n=1$) and let the system of three vectors
$\{e_0=\xi,e_1=e,e_2=\f e\}$ be a $\f$-basis which satisfies the following
conditions:
\begin{equation}\label{gij}
\begin{array}{l}
g(e_0,e_0)=g(e_1, e_1)=g(e_2, e_2)=1,\quad \\
g(e_0, e_1)=g(e_1,e_2)=g(e_0,e_2)=0.
\end{array}
\end{equation}
We denote the components of the tensors $F$, $\ta$, $\ta^*$ and $\om$ with respect to
the $\f$-basis $\left\{e_0,e_1,e_2\right\}$ as follows ${F_{ijk}=F(e_i,e_j,e_k)}$,
${\ta_k=\ta(e_k)}$, ${\ta^*_k=\ta^*(e_k)}$ and ${\om_k=\om(e_k)}$.
The properties (\ref{F-prop}) and (\ref{gij}) imply the equalities $F_{i12}=F_{i21}=0$
and $F_{i11}=-F_{i22}$ for any $i$. Then, bearing in mind (\ref{t}), we obtain for the
Lee forms the following:
\begin{equation}\label{t3}
\begin{array}{lll}
\ta_0=F_{110}+F_{220},\qquad & \ta_1=F_{111}=-F_{122}=-\ta^*_2,\qquad
&\om_1=F_{001},\\[0pt]
\ta^*_0=F_{120}+F_{210}, \qquad &\ta_2=F_{222}=-F_{211}=-\ta^*_1,\qquad
&\om_2=F_{002},\\[0pt]
& & \om_0=0.
\end{array}
\end{equation}
The arbitrary vectors $x$, $y$, $z$ in $T_p\MM$, $p\in \MM$, have the expression
$x=x^ie_i$, $y=y^ie_i$, $z=z^ie_i$ with respect to $\left\{e_0,e_1,e_2\right\}$.
\begin{prop}\label{prop-Fi}
The components $F_i$ $(i=1,2,\dots,11)$ of the fundamental tensor
$F$ for a 3-dimensional almost paracontact almost paracomplex Riemannian manifold are
the following:
%\begin{subequations}
\begin{equation}\label{Fi3}
\begin{array}{l}
F_{1}(x,y,z)=\left(x^1\ta_1-x^2\ta_2\right)\left(y^1z^1-y^2z^2\right), \\[0pt]
F_{2}(x,y,z)=F_{3}(x,y,z)=0,
\\[0pt]
F_{4}(x,y,z)=\frac{\ta_0}{2}\Bigl\{x^1\left(y^0z^1+y^1z^0\right)
+x^2\left(y^0z^2+y^2z^0\right)\bigr\},\\[0pt]
F_{5}(x,y,z)=\frac{\ta^*_0}{2}\bigl\{x^1\left(y^0z^2+y^2z^0\right)
+x^2\left(y^0z^1+y^1z^0\right)\bigr\},\\[0pt]
F_{6}(x,y,z)=F_{7}(x,y,z)=0,\\[0pt]
F_{8}(x,y,z)=\lm\bigl\{x^1\left(y^0z^1+y^1z^0\right)
-x^2\left(y^0z^2+y^2z^0\right)\bigr\}%,%\\[0pt]
%%
%&
%\quad \lm=F_{110}=-F_{220}
,\\[0pt]
F_{9}(x,y,z)=\mu\bigl\{x^1\left(y^0z^2+y^2z^0\right)
-x^2\left(y^0z^1+y^1z^0\right)\bigr\}%,%\\[0pt]
%&
%\quad \mu=F_{120}=-F_{210}
,\\[0pt]
F_{10}(x,y,z)=\nu\, x^0\left(y^1z^1-y^2z^2\right)%,\quad
%\nu=F_{011}=-F_{022}
,\\[0pt]
F_{11}(x,y,z)=x^0\bigl\{\om_{1}\left(y^0z^1+y^1z^0\right)
+\om_{2}\left(y^0z^2+y^2z^0\right)\bigr\},
\end{array}
\end{equation}
where
\[
\lm=F_{110}=-F_{220}, \qquad
\mu=F_{120}=-F_{210}, \qquad
\nu=F_{011}=-F_{022}
\]
and the components of the Lee forms are given in (\ref{t3}).
\end{prop}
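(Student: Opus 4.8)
The plan is to specialize the general formulas \textup{(\ref{F1-11})} to the case $n=1$ by evaluating every term on the $\f$-basis $\{e_0=\xi,\,e_1=e,\,e_2=\f e\}$. The computational input is elementary: $\f$ interchanges $e_1$ and $e_2$ and annihilates $e_0$, so $\f x = x^2e_1+x^1e_2$ and $\f^2 x = x^1e_1+x^2e_2$; the metric is orthonormal by \textup{(\ref{gij})}, giving $g(\f x,\f y)=x^1y^1+x^2y^2$ and $g(x,\f y)=x^1y^2+x^2y^1$; and the structural identities $F_{i12}=F_{i21}=0$, $F_{i11}=-F_{i22}$ hold, together with the Lee-form expressions \textup{(\ref{t3})} and the abbreviations $\lm=F_{110}$, $\mu=F_{120}$, $\nu=F_{011}$. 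With these substituted, each $F_i$ becomes a finite sum of monomials in $x^i,y^j,z^k$ whose coefficients can be read off directly.

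First I would dispose of the four vanishing components. By \propref{prop-dim} we have $\dim\mathbb{F}_2=n(n-1)(n+2)$, $\dim\mathbb{F}_3=n^2(n-1)$, $\dim\mathbb{F}_6=(n-1)(n+2)$ and $\dim\mathbb{F}_7=n(n-1)$, all of which equal $0$ when $n=1$; hence the subspaces $\mathbb{F}_2,\mathbb{F}_3,\mathbb{F}_6,\mathbb{F}_7$ are trivial and $F_2=F_3=F_6=F_7=0$ identically. One may also confirm this by direct substitution: for horizontal arguments $u,v,w$ the structure equations reduce $F$ to $F(u,v,w)=(\ta_1u^1-\ta_2u^2)(v^1w^1-v^2w^2)$, and feeding this into the relevant lines of \textup{(\ref{F1-11})} makes every coefficient cancel.

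For the surviving components I would substitute term by term. The components $F_1$, $F_4$, $F_5$ and the vertical ones $F_{10}$, $F_{11}$ are immediate: in $F_1$, for instance, the factor $g(\f x,\f y)\ta(\f^2 z)$ expands to $(x^1y^1+x^2y^2)(z^1\ta_1+z^2\ta_2)$, and after collecting all four terms (with the overall $\tfrac1{2n}=\tfrac12$) the coefficients of $\ta_1$ and $\ta_2$ reduce to $x^1(y^1z^1-y^2z^2)$ and $-x^2(y^1z^1-y^2z^2)$, yielding the stated factorization; likewise $F_{10}$ and $F_{11}$ follow at once from $\f^2y=y^1e_1+y^2e_2$, from $F_{012}=F_{021}=0$ with $F_{011}=-F_{022}=\nu$, and from $\om_0=0$.

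The one place demanding care is $F_8$ and $F_9$. Here I would evaluate the four $\xi$-valued brackets $F(\f^2x,\f^2y,\xi)$, $F(\f^2y,\f^2x,\xi)$, $F(\f x,\f y,\xi)$, $F(\f y,\f x,\xi)$ separately in terms of $\lm$ and $\mu$; the symmetric-in-$\lm$, antisymmetric-in-$\mu$ combination appearing in $F_8$ cancels the $\mu$-contributions and leaves $4\lm(x^1y^1-x^2y^2)$ in the first bracket, so that $F_8=\lm\{x^1(y^0z^1+y^1z^0)-x^2(y^0z^2+y^2z^0)\}$, while the complementary combination in $F_9$ cancels the $\lm$-terms and produces the coefficient $\mu$. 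Verifying precisely that the off-diagonal components $F_{120},F_{210}$ drop out of $F_8$ and that $F_{110},F_{220}$ drop out of $F_9$ is the main bookkeeping obstacle; once these cancellations are tracked, everything else is routine collection of monomials.
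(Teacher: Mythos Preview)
Your proposal is correct and follows the paper's approach: the paper's own proof is a single line stating that one substitutes the $3$-dimensional basis data \textup{(\ref{gij})}, \textup{(\ref{t3})} and the identities \textup{(\ref{F-prop})} into the general formulas \textup{(\ref{F1-11})}, exactly as you outline. Your invocation of \propref{prop-dim} to kill $F_2,F_3,F_6,F_7$ by dimension count is a clean shortcut the paper does not make explicit, but since that proposition precedes this one it is legitimate and slightly tidier than expanding those components by hand.
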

\begin{proof}
Using the expressions (\ref{F1-11}) of $F_i$ for
the corresponding classes $\F_i$ $(i=1,\allowbreak{}\dots,$ $11)$, the equalities
(\ref{F-prop}), (\ref{gij}) and (\ref{t3}), we obtain the corresponding form of $F_i$
for the lowest dimension of the considered manifold.
\end{proof}

As a result of \propref{prop-Fi}, we establish the truthfulness of the following
\begin{thm}\label{thm-3D}
The 3-dimensional almost paracontact almost paracomplex Riemannian manifolds belong to
the basic  classes
$\F_1$, $ \F_4$, $\F_5$,  $\F_8$, $\F_9$, $\F_{10}$,  $\F_{11}$ and to their direct
sums.
\end{thm}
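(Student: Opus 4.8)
The plan is to obtain the statement as an immediate corollary of \propref{prop-Fi}, in which the components $F_i$ for $n=1$ have already been computed explicitly. The entire content of the theorem is that four of the eleven components, namely $F_2$, $F_3$, $F_6$ and $F_7$, are forced to vanish, so that only the remaining seven can enter the decomposition of $F$.

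First I would invoke the decomposition principle recalled earlier in this section: a manifold of the considered type lies in a direct sum $\F_{i}\oplus\F_{j}\oplus\cdots$ if and only if its fundamental tensor satisfies $F=F_{i}+F_{j}+\cdots$, where the $F_k$ are the projections of $F$ onto the invariant subspaces $\mathbb{F}_k$. Hence it suffices to show that on every $3$-dimensional manifold of this type one has $F_2=F_3=F_6=F_7=0$ identically, whereas $F_1,F_4,F_5,F_8,F_9,F_{10},F_{11}$ are generically nonzero.

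This is exactly what the formulas (\ref{Fi3}) deliver: they give $F_2=F_3=0$ and $F_6=F_7=0$ outright, while the seven surviving components are parametrized by the quantities $\ta_1$, $\ta_2$, $\ta_0$, $\ta^*_0$, $\lm$, $\mu$, $\nu$, $\om_1$, $\om_2$ (nine independent parameters, matching the total dimension obtained by setting $n=1$ in \propref{prop-dim}). Therefore $F=F_1+F_4+F_5+F_8+F_9+F_{10}+F_{11}$ on any such manifold, which says precisely that it belongs to $\F_1\oplus\F_4\oplus\F_5\oplus\F_8\oplus\F_9\oplus\F_{10}\oplus\F_{11}$; a particular manifold then sits in whichever sub-sum corresponds to its nonvanishing components.

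As a structural check that accompanies rather than replaces the argument, I would note the reason behind the vanishing: substituting $n=1$ into \propref{prop-dim} yields $\dim\mathbb{F}_2=n(n-1)(n+2)=0$, $\dim\mathbb{F}_3=n^2(n-1)=0$, $\dim\mathbb{F}_6=(n-1)(n+2)=0$ and $\dim\mathbb{F}_7=n(n-1)=0$, so these four invariant subspaces are trivial once the paracontact distribution drops to dimension $2$. There is no real obstacle to overcome here—the substantive computation was already performed in \propref{prop-Fi}—so the only care required is to match each surviving term to its class and to apply the direct-sum characterization correctly.
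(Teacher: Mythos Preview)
Your proposal is correct and follows essentially the same approach as the paper: the theorem is stated as an immediate consequence of \propref{prop-Fi}, whose explicit formulas (\ref{Fi3}) already give $F_2=F_3=F_6=F_7=0$ in dimension~$3$. Your supplementary dimension count via \propref{prop-dim} is a nice consistency check, but the paper does not use it and it is not needed for the argument.
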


Let us remark that for the considered manifolds of dimension 3, the basic classes
$\F_2$, $\F_3$,
$\F_6$, $\F_7$ are restricted to the special class $\F_0$.

%Each 3-dimensional $(\MM,\f,\xi,\eta,g)$ is a Lorentzian manifold,
%which has applications in the theory of general relativity.

\section{Paracontact almost paracomplex Riemannian manifolds}\label{sec-PRM}

 Let $\M$, $\dim\MM=2n+1$, be an almost paracontact almost paracomplex Riemannian
 manifold such that the following condition is satisfied:
\begin{equation}\label{PG}
2g(x,\f y)=(\LL_\xi g)(x,y),
\end{equation}
where the Lie derivative $\LL$
of $g$ along $\xi$ has the following form in terms of $\n\eta$:
\begin{equation}\label{L}
(\LL_\xi g)(x,y)=(\nabla _x\eta)y+(\nabla _y\eta)x.
\end{equation}
Bearing in mind (\ref{n_eta_F}) and (\ref{L}), $\LL_\xi g$ is expressed by $F$ as
follows:
\begin{equation}\label{LF}
(\LL_\xi g)(x,y)=-F(x,\f y,\xi )-F(y,\f x,\xi ).
\end{equation}

In \cite{Sato77}, it is said that an $m$-dimensional almost paracontact Riemannian
manifold endowed with the property $2g(x,\f y)=(\nabla _x\eta)y+(\nabla _y\eta)x$ is a
\emph{paracontact Riemannian manifold}.
\begin{defn}
An almost paracontact almost paracomplex Riemannian manifold satisfied (\ref{PG}) is
called  \textit{paracontact almost paracomplex Riemannian manifold}.
\end{defn}
Now we determine the class of paracontact almost paracomplex Riemannian manifolds with
respect to the basic classes $\F_i$. % $(i=1,\dots,11)$.
Firstly, we compute $\LL_\xi g$ on each $\F_i$-manifold
using (\ref{LF}) and (\ref{F1-11}). Then we obtain

\begin{prop}
Let $\M$ be an almost paracontact almost paracomplex Riemannian manifold. Then we
have:
\begin{itemize}
\item[a)]  $(\LL_\xi g)(x,y) =0$ if and only if $\M$ belongs to
    $\F_1\oplus\F_2\oplus\F_3\oplus\F_7\oplus\F_8\oplus\F_{10}$;
\item[b)]  $(\LL_\xi g)(x,y) =-\frac{1}{n}\theta(\xi)g(x,\f y)$ if and only if $\M$
    belongs to $\F_4$;
\item[c)]  $(\LL_\xi g)(x,y) =-\frac{1}{n}\theta^*(\xi)g(\f x,\f y)$ if and only if
    $\M$ belongs to $\F_5$;
\item[d)]  $(\LL_\xi g)(x,y) =2\bigl(\nabla _x\eta \bigr)y$ if and only if $\M$
    belongs to $\F_6\oplus\F_9$;
\item[e)]  $(\LL_\xi g)(x,y) =-\eta (x)\om (\f y) - \eta (y)\om (\f x)$ if and only
    if $\M$ belongs to $\F_{11}$.
\end{itemize}
\end{prop}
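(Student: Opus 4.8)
The plan is to substitute each of the eleven explicit component tensors $F_i$ from (\ref{F1-11}) into the formula (\ref{LF}), namely $(\LL_\xi g)(x,y)=-F(x,\f y,\xi)-F(y,\f x,\xi)$, and read off the resulting $(0,2)$-tensor class by class. The decisive simplification is that the arguments appearing in (\ref{LF}) always have $\xi$ in the third slot and carry a $\f$ on the second slot; since $\f\xi=0$, $\eta\circ\f=0$ and $\f^2=I-\eta\otimes\xi$ by (\ref{str}), many terms collapse. In particular, for a component $F_i$ whose expression is proportional to $\eta$ in its first argument (the classes $\F_{10}$ and $\F_{11}$), the substitution $x\mapsto x$, $y\mapsto\f y$, $z\mapsto\xi$ and its companion behave very differently from the $\HH$-valued classes, so I expect the computation to split naturally according to where the factors of $\eta$ and $\f$ sit.

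First I would dispose of the classes that contribute nothing. For $\F_1$, $\F_2$, $\F_3$ the component tensors are built purely from arguments of the form $\f^2\cdot$ or $\f\cdot$ in \emph{all three} slots, so putting $\xi$ in the third slot kills them outright by $\f\xi=0$; hence these give $(\LL_\xi g)=0$. For $\F_7$, $\F_8$, $\F_{10}$ I would check the symmetry type: $\F_{10}$ has $\eta(x)$ in front and $\f^2$ on the other two slots, so $F_{10}(x,\f y,\xi)$ vanishes because the last argument is $\xi$; similarly the defining antisymmetrisations in $\F_7$ and the sign pattern in $\F_8$ force the symmetrised combination $-F_i(x,\f y,\xi)-F_i(y,\f x,\xi)$ to cancel, yielding zero. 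Collecting these establishes part (a): $\LL_\xi g=0$ exactly on $\F_1\oplus\F_2\oplus\F_3\oplus\F_7\oplus\F_8\oplus\F_{10}$.

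Next I would treat the genuinely nonzero cases. For $\F_4$ I substitute its expression, using $g(\f x,\f(\f y))=g(\f x,\f y)$ modulo an $\eta$-term that drops out, and $\eta(\xi)=1$, to obtain a multiple of $g(x,\f y)$; tracking the factor $\frac{\theta(\xi)}{2n}$ through the two symmetrised terms produces exactly $-\frac1n\theta(\xi)g(x,\f y)$, which is part (b). The class $\F_5$ is handled identically but with $\f$ moved by one slot, giving the $g(\f x,\f y)$ form of part (c). For part (e), the class $\F_{11}$, only the term with $\eta(x)$ in front survives after putting $\xi$ in the third slot and $\f y$ in the second, and using $\om(\f y)$ together with $\om(\xi)=0$ one recovers $-\eta(x)\om(\f y)-\eta(y)\om(\f x)$. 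Finally part (d) combines $\F_6\oplus\F_9$: here I would instead argue structurally, noting from (\ref{n_eta_F}) that $(\nabla_x\eta)y=-F(x,\f y,\xi)$, so the claim $(\LL_\xi g)(x,y)=2(\nabla_x\eta)y$ is equivalent to the symmetry $F(x,\f y,\xi)=F(y,\f x,\xi)$ on these two classes, which I verify directly from their defining formulas.

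The main obstacle is bookkeeping rather than conceptual: the components $F_6,\dots,F_9$ are long symmetrised-and-antisymmetrised expressions in $F(\f^2\cdot,\f^2\cdot,\xi)$ and $F(\f\cdot,\f\cdot,\xi)$, and I must be careful that substituting $\f y$ into the second slot interacts correctly with the identities $\f^3=\f$ and $g(\f x,\f y)=g(x,y)-\eta(x)\eta(y)$, so that the right cross-terms cancel and the surviving pieces assemble into the stated closed forms. The converse (``if and only if'') directions follow because the eleven subspaces $\mathbb F_i$ are mutually orthogonal and invariant, so the five mutually exclusive values of $\LL_\xi g$ computed above characterise the listed direct sums uniquely.
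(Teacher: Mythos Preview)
Your approach—substituting each component $F_i$ from (\ref{F1-11}) into the formula (\ref{LF}) and simplifying via the structure identities (\ref{str})—is exactly what the paper does; its entire proof is the single remark that one computes $\LL_\xi g$ on each $\F_i$-manifold using (\ref{LF}) and (\ref{F1-11}). Your organization (disposing of the vanishing classes first, then the nontrivial ones) and your structural shortcut for part~(d) via the symmetry $F(x,\f y,\xi)=F(y,\f x,\xi)$ are sound elaborations of that same computation.
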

It is known that $\xi$ is a Killing vector field when $\LL_\xi g = 0$. Therefore, the
latter proposition implies
\begin{cor}\label{cor:Kill}
An almost paracontact almost paracomplex Riemannian manifold $\M$ has a Killing vector
field $\xi$ if and only if $\M$ belongs to $\F_i$ $(i=1,2,3,7,8,10)$ or to their
direct sums.
\end{cor}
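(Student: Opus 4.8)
The plan is to derive the statement directly from part (a) of the preceding proposition together with the standard characterization of a Killing vector field. First I would recall the fact, already noted just above, that $\xi$ is a Killing vector field of $(\MM,g)$ precisely when its flow preserves $g$, equivalently when the Lie derivative vanishes identically, $\LL_\xi g=0$. This reduces the corollary to identifying the class of manifolds for which $\LL_\xi g=0$ as a $(0,2)$-tensor.

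Next I would invoke part (a) of the proposition, which states that $\LL_\xi g=0$ holds if and only if $\M$ belongs to $\F_1\oplus\F_2\oplus\F_3\oplus\F_7\oplus\F_8\oplus\F_{10}$. For the forward direction, if $\xi$ is Killing then $\LL_\xi g=0$, so $\M$ lies in this direct sum, i.e. $F=F_1+F_2+F_3+F_7+F_8+F_{10}$, and hence $\M$ is in one of the classes $\F_i$ $(i=1,2,3,7,8,10)$ or in a direct sum of them. Conversely, if $\M$ belongs to some $\F_i$ with $i\in\{1,2,3,7,8,10\}$ or to a direct sum of such classes, then by the remark that membership in a direct sum of basic classes is equivalent to $F$ being the corresponding sum of components, $\M$ lies in $\F_1\oplus\F_2\oplus\F_3\oplus\F_7\oplus\F_8\oplus\F_{10}$; part (a) then yields $\LL_\xi g=0$, so $\xi$ is Killing.

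Since all the computational content sits in the proposition, whose proof rests on substituting the components from (\ref{F1-11}) into (\ref{LF}), no further calculation is needed here. The only point requiring a word of care is matching the two equivalent phrasings, namely the single direct sum $\F_1\oplus\F_2\oplus\F_3\oplus\F_7\oplus\F_8\oplus\F_{10}$ versus the classes $\F_i$ $(i=1,2,3,7,8,10)$ or their direct sums; this is settled by the earlier observation that a manifold belongs to a direct sum of basic classes exactly when $F$ decomposes as the sum of the respective components. I therefore do not expect any genuine obstacle, as the corollary is an immediate specialization of case (a).
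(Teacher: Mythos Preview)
Your proposal is correct and follows exactly the paper's approach: the paper simply notes that $\xi$ is Killing precisely when $\LL_\xi g=0$ and then invokes part (a) of the preceding proposition. Your write-up is in fact more detailed than the paper's one-line justification, including the care you take in matching the two phrasings of the class.
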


We denote by ${\F_4}'$ the subclass of $\F_4$ determined by $\ta(\xi)=-2n$, i.e.
\begin{equation}\label{F4'}
{\F_4}'=\bigl\{\F_4\, |\, \ta(\xi)=-2n\bigr\}.
\end{equation}
Then, the component ${F_4}'$ of $F$ corresponding to the subclass ${\F_4}'$ is
\begin{equation}\label{F4'=}
	{F_4}'(x,y,z)=-g(\f x,\f y)\eta(z)-g(\f x,\f z)\eta(y).
\end{equation}
\begin{thm}\label{thm:para}
Paracontact  almost paracomplex Riemannian manifolds belong to ${\F_4}'$ or to its
direct sums with $\F_1$, $\F_2$, $\F_3$, $\F_7$, $\F_8$ and $\F_{10}$.
\end{thm}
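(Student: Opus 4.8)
The plan is to characterize paracontact almost paracomplex Riemannian manifolds by combining the defining condition (\ref{PG}) with the preceding proposition, which already computes $\LL_\xi g$ class by class. First I would rewrite the paracontact condition $2g(x,\f y)=(\LL_\xi g)(x,y)$ using the fact that $\LL_\xi g$ decomposes according to the orthogonal splitting $\mathcal{S}=\ell_1(\mathcal{S})\oplus\ell_2(\mathcal{S})\oplus\ell_3(\mathcal{S})$. The key observation is that the tensor $g(\cdot,\f\cdot)=\ell_1(\tg)$ lives entirely in the $\ell_1$-component, i.e. it is supported on the paracontact distribution $\HH$ and vanishes whenever $\xi$ is inserted. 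Therefore the condition (\ref{PG}) forces $\LL_\xi g$ to have no $\ell_2$- or $\ell_3$-component, and its $\ell_1$-component must equal $2g(x,\f y)$.

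Next I would go through the five cases of the proposition and match them against $2g(x,\f y)$. Cases (c), (d), (e) can be discarded directly: the right-hand side $-\frac{1}{n}\ta^*(\xi)g(\f x,\f y)$ in (c) is proportional to $g(\f x,\f y)=g-\eta\otimes\eta$, which is symmetric but not of the form $g(x,\f y)$ unless it vanishes; case (d) gives $2(\n_x\eta)y$, an expression whose symmetric and antisymmetric parts do not reduce to $g(x,\f y)$; and case (e) produces a tensor supported where $\eta(x)$ or $\eta(y)$ is nonzero, i.e. with nontrivial $\ell_3$-component, incompatible with $2g(x,\f y)\in\ell_1(\mathcal{S})$. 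Case (a), $\LL_\xi g=0$, can only satisfy (\ref{PG}) in the degenerate situation $g(x,\f y)=0$, which is excluded, so the classes $\F_1\oplus\F_2\oplus\F_3\oplus\F_7\oplus\F_8\oplus\F_{10}$ contribute no constraint by themselves but will survive as ``free'' summands because their contribution to $\LL_\xi g$ is zero. This is the crucial point: a manifold lies in $\F_i\oplus\cdots$ precisely when $F$ is the sum of the corresponding components $F_i$, and the classes in (a) add nothing to $\LL_\xi g$, so they may be appended freely.

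This isolates case (b): $\M\in\F_4$ gives $(\LL_\xi g)(x,y)=-\frac{1}{n}\ta(\xi)g(x,\f y)$, which matches $2g(x,\f y)$ exactly when $-\frac{1}{n}\ta(\xi)=2$, that is $\ta(\xi)=-2n$. By (\ref{F4'}) this is precisely the defining condition of the subclass ${\F_4}'$, and substituting $\ta(\xi)=-2n$ into $F_4$ from (\ref{F1-11}) yields the component ${F_4}'$ displayed in (\ref{F4'=}). Thus the paracontact condition selects exactly the $\F_4$-part with $\ta(\xi)=-2n$, while permitting arbitrary additional components from the $\LL_\xi g$-annihilating classes $\F_1,\F_2,\F_3,\F_7,\F_8,\F_{10}$. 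Assembling these observations, a manifold is paracontact almost paracomplex if and only if $F={F_4}'+(\text{components in }\F_1\oplus\F_2\oplus\F_3\oplus\F_7\oplus\F_8\oplus\F_{10})$, which is the assertion of the theorem.

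The main obstacle I anticipate is the verification that no \emph{combination} of contributions from different classes can conspire to produce $2g(x,\f y)$ in a way not captured by the single-class analysis; that is, one must use the orthogonality and structure-group invariance of the decomposition $\mathbb{F}=\bigoplus\mathbb{F}_i$ to guarantee that $\LL_\xi g$ is additive over the class decomposition of $F$ and that the target $2g(x,\f y)$, lying in the one-dimensional invariant piece spanned by $\ell_1(\tg)$, can only be hit by the $\F_4$-component. Once additivity and the identification of the ambient invariant subspace are in hand, matching coefficients is routine, so the real work is the bookkeeping of which classes feed into each irreducible piece of $\LL_\xi g$.
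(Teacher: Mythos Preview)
Your proposal is correct and follows essentially the same route as the paper. The paper's proof is terser: it invokes the explicit formulas (\ref{F1-11}) for the $F_i$ together with (\ref{PG}) and (\ref{LF}) to read off directly that $F=F_1+F_2+F_3+{F_4}'+F_7+F_8+F_{10}$, then argues (as you do) that dropping ${F_4}'$ would force $g(x,\f y)=0$, so that component is indispensable. Your version routes the same computation through the preceding proposition and frames the elimination of $\F_5,\F_6,\F_9,\F_{11}$ via the $\ell_i$-decomposition and invariance; this is conceptually the same argument, and the ``main obstacle'' you flag is handled in the paper simply by the linearity of $F\mapsto\LL_\xi g$ over the orthogonal splitting $F=\sum F_i$, which makes the class-by-class contributions additive without further work.
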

\begin{proof}
Let us consider an arbitrary almost paracontact almost paracomplex Riemannian
manifold, i.e. $F=F_1+\dots+F_{11}$.
Using the expressions  (\ref{F1-11}) of $F_i$ for the corresponding classes $\F_i$ $(i
= 1,\dots, 11)$ and the condition (\ref{PG}), we obtain
\begin{equation}\label{Fpara}
F=F_1+F_2+F_3+{F_4}'+F_7+F_8+F_{10},
\end{equation}
where ${F_4}'$ is determined by (\ref{F4'=}).

Vice versa, if (\ref{Fpara}) holds true, then it implies (\ref{PG}) by (\ref{LF}),
i.e. $\M$ is a paracontact almost paracomplex Riemannian manifold.
Supposing that $\M$ belongs to some of $\F_i$ $(i = 1,2,3,7,8,10)$ or their direct
sum, it follows that $g$ is degenerate.
Therefore, the component ${F_4}'$ is indispensable and  we get the statement.
\end{proof}

Let us remark that ${\F_4}'$ and ${\F_0}$ are subclasses of ${\F_4}$ without common
elements.

Moreover, bearing in mind \cororref{cor:Kill} and \thmref{thm:para}, we conclude that
paracontact almost paracomplex Riemannian manifolds with a Killing vector field $\xi$
do not exist, i.e. for the manifolds studied, there is no analogue of a K-contact
manifold.

%\textcolor[rgb]{1.00,0.00,0.00}{The subclass ${\F_4}'$ can be
%\\
%zabelezhka za para-Sasakian [AdatiMiyazawa77] ili special paracontact ... [Sato77]}

In \cite{SatoMats79}, it is introduced the notion of a para-Sasakian Riemannian
manifold of an arbitrary dimension by the condition $\f x=\n_x \xi$.
The same condition determines a special kind of paracontact almost paracomplex
Riemannian manifolds. These manifolds we call para-Sasakian paracomplex Riemannian
manifolds. Then, using (\ref{F4'}), we obtain the truthfulness of the following
\begin{thm}\label{thm:pS}
The class of the para-Sasakian paracomplex Riemannian manifolds is ${\F_4}'$.
\end{thm}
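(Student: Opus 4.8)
The plan is to characterize the para-Sasakian paracomplex Riemannian manifolds directly from the defining condition $\f x=\n_x\xi$ and to show that this condition is equivalent to membership in the subclass ${\F_4}'$. First I would translate the para-Sasakian condition into the language of the fundamental tensor $F$. Since $\n_x\xi=\f x$ means $g(\n_x\xi,y)=g(\f x,y)$ for all $y$, and since \eqref{n_eta_F} gives $g(\n_x\xi,y)=(\n_x\eta)y=-F(x,\f y,\xi)$, the defining relation becomes the pointwise identity
\begin{equation}\label{pS-F}
-F(x,\f y,\xi)=g(\f x,y)
\end{equation}
for all $x,y$. The goal is to extract from \eqref{pS-F} exactly the component ${F_4}'$ of \eqref{F4'=} and to verify that all other components vanish.

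Next I would exploit the fact, already established in \thmref{thm:para}, that every paracontact almost paracomplex Riemannian manifold lies in ${\F_4}'\oplus\F_1\oplus\F_2\oplus\F_3\oplus\F_7\oplus\F_8\oplus\F_{10}$. The para-Sasakian condition $\f x=\n_x\xi$ is visibly a strengthening of the paracontact condition \eqref{PG}: indeed, symmetrizing \eqref{pS-F} in $x$ and $y$ and using \eqref{LF} recovers $(\LL_\xi g)(x,y)=2g(x,\f y)$, so a para-Sasakian paracomplex manifold is in particular paracontact. Hence it suffices to start from the decomposition \eqref{Fpara} and impose the full (not merely symmetrized) strength of \eqref{pS-F} to kill the remaining summands $F_1,F_2,F_3,F_7,F_8,F_{10}$, leaving only ${F_4}'$. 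Concretely, I would substitute each listed component from \eqref{F1-11} into the left-hand side $-F(x,\f y,\xi)$ and compare with $g(\f x,y)$.

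The computation simplifies because most summands contribute nothing to $F(\cdot,\f\,\cdot,\xi)$. The classes $\F_1,\F_2,\F_3$ are built entirely from $\f^2$-projected arguments and carry no $\xi$ in the third slot, so $F_i(x,\f y,\xi)=0$ for $i=1,2,3$; likewise $F_{10}(x,\f y,\xi)$ vanishes since $F_{10}$ requires $\eta(x)$ in the first argument and its evaluation against $\xi$ in the last slot gives zero by $\eta\circ\f=0$ and the $\f^2$ structure. For $F_7$ and $F_8$ one checks that their third-slot $\xi$-components are tied to $(\n\eta)$ in a way that, once \eqref{pS-F} forces the symmetric part to equal $g(x,\f y)$ exactly, leaves no room for the antisymmetric ($\F_7$) or the $g(\f x,\f y)$-type symmetric traceless ($\F_8$) contributions. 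Finally, evaluating ${F_4}'$ from \eqref{F4'=} gives $-{F_4}'(x,\f y,\xi)=g(\f x,\f\f y)=g(\f x,y)$ after using $\f^2 y=y-\eta(y)\xi$ and $g(\f x,\xi)=0$, which matches \eqref{pS-F} precisely.

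The main obstacle will be the careful bookkeeping for $\F_7$ and $\F_8$: unlike $\F_1,\F_2,\F_3,\F_{10}$, these classes do have nonzero $F(\cdot,\f\,\cdot,\xi)$ in general, so their elimination is not automatic and must be argued from the rigidity of \eqref{pS-F} as an identity in both $x$ and $y$ separately, rather than only in its symmetrization. Once that is handled, collecting the surviving term shows $F={F_4}'$, and conversely $F={F_4}'$ yields $\f x=\n_x\xi$ by reversing the same chain through \eqref{n_eta_F}, establishing that the class is exactly ${\F_4}'$.
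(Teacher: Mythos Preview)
Your argument contains a logical inversion at the crucial step. You correctly observe that $F_i(x,\f y,\xi)=0$ for $i=1,2,3,10$, but this means precisely that the para-Sasakian identity $-F(x,\f y,\xi)=g(\f x,y)$ places \emph{no constraint whatsoever} on those components---it cannot ``kill'' them. The condition $\n_x\xi=\f x$ is, via \eqref{n_eta_F}, a statement only about $F(\cdot,\cdot,\xi)$; hence any tensor of the form ${F_4}'+F_1+F_2+F_3+F_{10}$ with arbitrary horizontal parts $F_1,F_2,F_3,F_{10}$ satisfies your equation \eqref{pS-F} just as well as ${F_4}'$ alone. Your plan to ``impose the full strength of \eqref{pS-F} to kill the remaining summands $F_1,F_2,F_3,F_7,F_8,F_{10}$'' therefore succeeds only for $F_7$ and $F_8$: these \emph{do} contribute nontrivially to $F(x,\f y,\xi)$ and are genuinely forced to vanish by matching against the symmetric form $g(\f x,y)=g(\f y,x)$. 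For the other four it fails, and what your computation actually establishes is that $\n_x\xi=\f x$ characterizes ${\F_4}'\oplus\F_1\oplus\F_2\oplus\F_3\oplus\F_{10}$, not ${\F_4}'$.

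The paper's own justification is the single phrase ``using \eqref{F4'}'', which does not address this point either. In the source \cite{SatoMats79} the para-Sasakian condition is in fact formulated through $(\n_x\f)y$ (a condition on the full tensor $F$, not merely on $\n\xi$), and the relation $\n_x\xi=\f x$ is a consequence rather than the complete definition; with that stronger hypothesis the horizontal components are indeed constrained and the conclusion $F={F_4}'$ follows. To repair your argument you must either work from that stronger definition of para-Sasakian, or else accept that the identity $\f x=\n_x\xi$ taken in isolation yields only the larger class above.
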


\section{The Nijenhuis tensor}\label{sec-TNT}

\subsection{Introduction of the Nijenhuis tensor}

Let us consider the product manifold $\check{\MM}$ of an almost paracontact almost
paracomplex manifold $(\MM,\phi,\allowbreak{}\xi,\allowbreak{}\eta)$ and the real line
$\R$, i.e. $\check\MM = \MM \times\R$. We denote a vector field on $\check\MM$ by
$\left(x,a\frac{\D}{\D r} \right)$, where $x$ is tangent to $\check\MM$, $r$ is the
coordinate on $\R$ and $a$  is a function on  $\MM \times\R$. Further, we use the
denotation $\partial_r=\frac{\D}{\D r}$ for brevity. Following \cite{Sato76}, we
define an almost paracomplex structure $\check P$ on $\check \MM$ by:
\begin{equation}\label{P}
\check P \left(x,\,a\partial_r \right) =\left(\f x +\frac{a}{r}\xi,\,
r\eta(x)\partial_r \right)
\end{equation}
that implies
\[
\check P x=\f x,\qquad \check P\xi=r\partial_r,\qquad \check
P\partial_r=\frac{1}{r}\xi.
\]
Further, we use the setting $\ze=r\partial_r$.
It easy to check that ${\check{P}}^2 = I$ and $\tr \check P=0$. In the case when
$\check P$ is integrable, it is said that the almost paracontact structure
$(\f,\xi,\eta)$ is {\em normal}.

It is known, the vanishing of the Nijenhuis torsion $\left[\check P,\check P\right]$
of $\check P$ is a necessary and sufficient condition for integrability of $\check
P$.
According to \cite{Sato76}, the condition of normality is equivalent to vanishing of
the following four tensors:
\begin{equation}\label{NK}
\begin{array}{l}
N^{(1)}(x,y)=[\f,\f](x,y) - \D\eta(x,y)\xi,\\[0pt]
N^{(2)}(x,y)=(\LL_{\f x} \eta)(y) - (\LL_{\f y} \eta)(x),\\[0pt]
N^{(3)}(x)=(\LL_\xi \f)(x),\\[0pt]
N^{(4)}(x)=(\LL_\xi \eta)(x),
\end{array}
\end{equation}
where the Nijenhuis torsion of $\f$ is determined by:
\begin{equation}\label{[ff]}
[\f,\f](x,y)=[\f x,\f y]+\f^2[x,y]-\f[\f x,y]-\f[x,\f y]
\end{equation}
and $\D\eta$ is the exterior derivative of $\eta$ given by:
\begin{equation}\label{deta}
\D\eta (x,y)=(\nabla _x\eta)y-(\nabla _y\eta)x.
\end{equation}

According to (\ref{n_eta_F}) and (\ref{deta}), $\D\eta$ is expressed by $F$ as
follows:
\begin{equation}\label{detaF}
\D\eta (x,y)=-F(x,\f y,\xi )+F(y,\f x,\xi ).
\end{equation}

Let $\M$  be a $(2n+1)$-dimensional almost paracontact almost paracomplex Riemannian
manifold.

In \cite{Sato76}, it is proved that the vanishing of $N^{(1)}$ implies the vanishing
of $ N^{(2)}$, $ N^{(3)}$, $ N^{(4)}$. Then $N^{(1)}$ is denoted simply by $N$, i.e.
\begin{equation}\label{Ndef}
N(x,y) = [\f,\f](x,y) - \D\eta(x,y) \xi,
\end{equation}
and it is called the \emph{Nijenhuis tensor of the structure $(\f,\xi,\eta)$}.
Therefore, an almost paracontact structure $(\f,\xi,\eta)$  is normal if and only if
its Nijenhuis tensor is zero.

Obviously, $N$ is an antisymmetric tensor, i.e. $N(x,y) = -N(y,x)$. According to
(\ref{[ff]}), (\ref{deta}) and (\ref{Ndef}),  the tensor $N$ has the following form in
terms of the covariant derivatives of $\f$ and $\eta$ with respect to $\nabla$:
\[
N(x,y)=(\nabla_{\f x} \f)y- (\nabla_{\f y} \f)x-\f(\nabla_ x \f)y+\f(\nabla_ y
\f)x-(\nabla_x \eta)y\,\xi+(\nabla_y \eta)x\,\xi.
\]

The corresponding tensor of type (0,3) of the Nijenhuis tensor on $\M$ is defined by
equality  $N(x,y,z)=g\left(N(x,y),z\right)$.
Then, using (\ref{F=nfi}) and (\ref{n_eta_F}),  we express $N$ in terms of the
fundamental  tensor $F$ as follows:
\begin{equation}\label{N}
\begin{array}{ll}
N(x,y,z)=F(\f x,y,z)-F(\f y,x,z)-F(x,y,\f z)+F(y,x,\f z)\\[0pt]
\phantom{N(x,y,z)=}+\eta(z)\left\{F(x,\f y,\xi)-F(y,\f
x,\xi)\right\}.
\end{array}
\end{equation}

\begin{prop}\label{prop-p2}
The Nijenhuis tensor on an almost paracontact almost paracomplex Riemannian manifold
has the following properties:
\[
\begin{array}{ll}
N(\f^2 x, \f y,\f z)= - N(\f^2 x, \f^2 y,\f^2 z),  \qquad& N(\f^2 x, \f^2 y,\f^2 z) =
N(\f x, \f y,\f^2 z), \\[0pt]
N( x, \f^2 y,\f^2 z)= - N(x, \f y,\f z), \qquad& N(\f^2 x, \f^2 y, z) = N(\f x, \f y,
z), \\[0pt]
N(\xi, \f y,\f z) = {-} N(\xi, \f^2 y,\f^2 z), \qquad& N(\f x, \f y, \xi) = N(\f^2 x,
\f^2 y, \xi). \\[0pt]
\end{array}
\]
\end{prop}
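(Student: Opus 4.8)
The plan is to derive all six identities from the single expression (\ref{N}) of $N$ in terms of the fundamental tensor $F$, using only the structural relations (\ref{str}) and the symmetries (\ref{F-prop}). Two elementary consequences will carry the bulk of the work. First, the algebraic facts $\f^3=\f$, $\eta\circ\f=\eta\circ\f^2=0$ and $\f\xi=0$. Second, substituting $\f$-images into the second equality of (\ref{F-prop}) and using $\eta\circ\f=0$ gives the sign-flip rules $F(x,\f y,\f z)=-F(x,\f^2 y,\f^2 z)$ and, after $y\mapsto\f y$, $z\mapsto\f^2 z$ together with $\f^3=\f$, also $F(x,\f y,\f^2 z)=-F(x,\f^2 y,\f z)$, valid for all $x,y,z$. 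I would record these at the outset.

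Next I would observe that the six identities are specializations of just two \emph{master} relations, which is the real content of the statement. These are: (I) $N(a,\f y,\f z)=-N(a,\f^2 y,\f^2 z)$ for every first argument $a$, which yields the three identities in the left column on taking $a=\f^2 x$, $a=x$, $a=\xi$; and (II) $N(\f x,\f y,w)=N(\f^2 x,\f^2 y,w)$ for every third argument $w$, which yields the three identities in the right column on taking $w=\f^2 z$, $w=z$, $w=\xi$. For (I), I would substitute $y\mapsto\f y$, $z\mapsto\f z$ and then $y\mapsto\f^2 y$, $z\mapsto\f^2 z$ into (\ref{N}); the brackets weighted by $\eta(\f z)$ and $\eta(\f^2 z)$ vanish since $\eta\circ\f=\eta\circ\f^2=0$, and using $\f^3=\f$ with the first sign-flip rule the sum $N(a,\f y,\f z)+N(a,\f^2 y,\f^2 z)$ collapses to $-F(a,\f y,\f^2 z)-F(a,\f^2 y,\f z)$, which is zero by the second rule. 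This step is routine.

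The main obstacle is relation (II). Substituting $x\mapsto\f x$, $y\mapsto\f y$ and then $x\mapsto\f^2 x$, $y\mapsto\f^2 y$ into (\ref{N}) and forming the difference leaves terms whose third slot is variously $w$, $\f w$ and $\xi$; these cannot cancel while they carry different third arguments, and no symmetry of $F$ acts on the \emph{first} slot to relate, say, $F(\f^2 x,\f y,w)$ to $F(\f x,\f^2 y,w)$ directly. The idea that unblocks this is to reduce everything to a common third argument: apply the sign-flip rules to rewrite each $\f w$-term with $\f^2 w$ in the third slot, and then expand $\f^2 w=w-\eta(w)\xi$ (from $\f^2=I-\eta\otimes\xi$), so that every $\f^2 w$-term splits into a $w$-term and an $\eta(w)$-weighted $\xi$-term. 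After this substitution the pure $w$-terms and the $\eta(w)$-weighted $\xi$-terms pair off exactly, and the whole difference telescopes to $0$.

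I expect the only delicate bookkeeping to be tracking signs across the roughly dozen terms produced in (II); once the reduction $\f^2 w=w-\eta(w)\xi$ is in place the cancellation is forced and requires no further input. Having established (I) and (II), I would close by reading off the six displayed identities as the announced specializations, so that no identity needs to be treated separately.
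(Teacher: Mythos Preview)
Your proposal is correct and follows the paper's approach, which is simply ``direct computations from the properties (\ref{F-prop}) and the expression (\ref{N}).'' Your organization of the six identities into the two master relations (I) and (II) is a clean way to structure the verification that the paper does not make explicit, but the underlying method is the same.
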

\begin{proof}
The equalities from the above follow by direct computations from the properties
(\ref{F-prop}) and the expression (\ref{N}).
\end{proof}

In \cite{Sato76}, there are given the following relations between the tensors
$N^{(1)}$, $N^{(2)}$, $ N^{(3)}$ and $ N^{(4)}$:
\begin{equation}\label{NkN}
	\begin{array}{l}
	N^{(2)}(x,y) = -\eta \left( N^{(1)}(x,\f y)\right)-\eta \left( N^{(1)}(\f
x,\xi)\right)\eta(y),\\[0pt]
	N^{(3)}(x) = - N^{(1)}(\f x,\xi),\\[0pt]
	N^{(4)}(x) = - N^{(2)}(\f x,\xi),\qquad
	N^{(4)}(x) = -\eta \left( N^{(3)}(\f x)\right).
\end{array}
\end{equation}

Applying the expression (\ref{N}) to equalities (\ref{NkN}), we obtain the form of
$N^{(2)}$, $ N^{(3)}$ and $ N^{(4)}$ in terms of the fundamental tensor $F$:
\begin{equation}\label{NkF}
	\begin{array}{l}
 N^{(2)}(x,y) = - F(x,y,\xi) + F(y,x,\xi) - F(\f x,\f y,\xi) + F(\f y,\f
 x,\xi),\\[0pt]
 N^{(3)}(x, y)= F(\xi,x,y) - F(x,y,\xi) + F(\f x,\f y,\xi),\\[0pt]
 N^{(4)}(x) = - F(\xi,\xi,\f x),
\end{array}
\end{equation}
where it is used the denotation $N^{(3)}(x, y) = g\left(N^{(3)}(x), y\right)$.

\begin{prop}\label{prop-p3}
Let $(\MM,\f,\xi,\eta,g)$ be an $\F_{i}$-manifold $(i=1,2,\dots,11)$. Then the four
tensors $N^{(k)}$ ${(k=1,2,3,4)}$ on this manifold have the form in the respective
cases, given in \tablref{tab:NijenhuisTensors}.
\begin{table}
 \caption{Nijenhuis tensors}\label{tab:NijenhuisTensors}
  \vglue4mm
\footnotesize{
\begin{tabular}{|c||c|c|c|c|}
\hline
& ${N^{(1)}(x,y,z)}$ & $N^{(2)}(x,y)$ & $N^{(3)}(x,y)$ & $N^{(4)}(x)$ \\
\hline
\hline
$\F_{1}$ & 0 & {0} & {0} & {0}\\
$\F_{2}$ & {0}& {0} & {0} & {0}\\
$\F_{3}$ & ${-2\bigl\{F(\f x,\f y,\f z)
+F(\f^2 x,\f^2 y,\f z)\bigr\}}$& {0} & {0} & {0}\\
$\F_{4}$ & {0}& {0} & {0} & {0}\\
$\F_{5}$ & {0}& {0} & {0} & {0}\\
$\F_{6}$ & {0}& {0} & {0} & {0}\\
$\F_{7}$ & $4F(x,\f y,\xi)\eta(z)$ & $-4F(x,y,\xi)$ & {0} & {0}\\
$\F_{8}$ & $2\bigl\{\eta(x)F(y,\f z,\xi)
-\eta(y)F(x,\f z,\xi)\bigr\}$& {0} & $-2F(x,y,\xi)$ & {0}\\
$\F_{9}$ & $2\bigl\{\eta(x)F(y,\f z,\xi) -\eta(y)F(x,\f
z,\xi)\bigr\}$& {0} & $-2F(x,y,\xi)$ & {0}\\
$\F_{10}$ & $-\eta(x)F(\xi,y,\f z)+\eta(y)F(\xi,x,\f z)$ & {0}& $F(\xi,x,y)$ & {0} \\
$\F_{11}$ & $\eta(z)\bigl\{\eta(x)\om(\f y)-\eta(y)\om(\f
x)\bigr\}$ & $\eta(y)\om(x)-\eta(x)\om(y)$ & $\eta(y)\om(x)$ & $-\om(\f x)$ \\
\hline
\end{tabular}
}
\end{table}
\end{prop}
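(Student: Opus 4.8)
The plan is to prove everything by direct substitution, exploiting that each $N^{(k)}$ is a linear function of $F$: on an $\F_i$-manifold one has $F=F_i$, so it suffices to insert the explicit component $F_i$ from (\ref{F1-11}) into the closed expressions (\ref{N}) for $N^{(1)}=N$ and (\ref{NkF}) for $N^{(2)}$, $N^{(3)}$, $N^{(4)}$, and then reduce using the algebraic identities (\ref{str}), (\ref{g}) together with the symmetries (\ref{F-prop}). I would record the three simpler tensors $N^{(2)}$, $N^{(3)}$, $N^{(4)}$ first and leave $N^{(1)}$ for last.

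First, $N^{(4)}$. By (\ref{NkF}) we have $N^{(4)}(x)=-F(\xi,\xi,\f x)=-\om(\f x)$, so it is controlled entirely by the Lee form $\om$. Reading off $F_i(\xi,\xi,\cdot)$ from (\ref{F1-11}) for $i\le 10$, every summand contains a factor that dies once two entries equal $\xi$ (typically $g(\f\xi,\cdot)=0$, $g(\xi,\f\cdot)=\eta(\f\cdot)=0$, or $\f^2\xi=0$ after $\f\xi=0$), whence $N^{(4)}=0$; only $F_{11}(\xi,\xi,\f x)=\om(\f x)$ survives, giving the last column. Next, $N^{(2)}$ and $N^{(3)}$ see $F$ only through the slices $F(\xi,\cdot,\cdot)$, $F(\cdot,\cdot,\xi)$ and $F(\f\cdot,\f\cdot,\xi)$. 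Evaluating these slices of each $F_i$ with (\ref{g}) and $\eta\circ\f=0$, they vanish identically for $i=1,\dots,6$ and reproduce the tabulated entries for $i=7,\dots,11$ (some of which are still zero).

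The remaining and heaviest task is $N^{(1)}$. Substituting each $F_i$ into (\ref{N}) and collapsing the resulting terms with $F(x,y,z)=F(x,z,y)$ and the $\f$-twisted relation $F(x,y,z)=-F(x,\f y,\f z)+\eta(y)F(x,\xi,z)+\eta(z)F(x,y,\xi)$ yields $0$ for $i\in\{1,2,4,5,6\}$ and the tabulated expressions for $i\in\{3,7,8,9,10,11\}$; here the antisymmetry $N(x,y,z)=-N(y,x,z)$ and the $\f$-reductions of \propref{prop-p2} serve as running consistency checks that cut down the number of independent evaluations. I expect the genuine obstacle to be the class $\F_3$: there $F_3$ is an alternating combination of six permuted terms of types $F(\f^2\cdot,\f^2\cdot,\f^2\cdot)$ and $F(\f\cdot,\f\cdot,\f^2\cdot)$, so after substitution into (\ref{N}) one must repeatedly apply the two symmetries above, and carefully track which arguments are horizontal (killed by $\eta$) versus vertical, in order to merge the dozen-odd resulting terms into the compact form $-2\{F(\f x,\f y,\f z)+F(\f^2 x,\f^2 y,\f z)\}$. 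Verifying the vanishing on $\F_2$ and $\F_6$ is of the same nature but easier, since the symmetric Lee-form pieces they share with $F_1$ can be disposed of once; everything else is routine bookkeeping.
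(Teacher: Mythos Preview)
Your proposal is correct and follows essentially the same route as the paper: direct substitution of each $F_i$ from (\ref{F1-11}) into the closed formulas (\ref{N}) for $N^{(1)}$ and (\ref{NkF}) for $N^{(2)},N^{(3)},N^{(4)}$, followed by simplification via (\ref{str}), (\ref{g}), (\ref{F-prop}). The paper's proof is a one-line pointer to exactly these identities; your write-up simply fleshes out the order of evaluation and flags $\F_3$ as the most laborious case, which is accurate.
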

\begin{proof}
We apply direct computations, using  (\ref{F1-11}), (\ref {N}) and (\ref {NkF}).
\end{proof}

By virtue \propref{prop-p3}, we have the following
\begin{thm}\label{thm-F123}
An almost paracontact almost paracomplex Riemannian manifold $\M$ has:
\begin{enumerate}
  \item[a)]
   vanishing $N^{(1)}$ if and only if it belongs to some of the basic classes
   $\F_1$, $\F_2$, $\F_4$, $\F_5$,  $\F_6$ or to their direct sums;
  \item[b)]
  vanishing $N^{(2)}$ if and only if it belongs to some of the basic classes
  $\F_1,\dots, \F_6$, $\F_8$, $\F_9$,  $\F_{10}$ or to their direct sums;
  \item[c)]
  vanishing $N^{(3)}$ if and only if it belongs to the basic classes $\F_1,\dots,
  \F_7$ or to some of their direct sums;
    \item[d)]
  vanishing $N^{(4)}$ if and only if it belongs to some of the basic classes
  $\F_1,\dots, \F_{10}$ or to their direct sums.
\end{enumerate}
\end{thm}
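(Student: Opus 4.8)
The plan is to read the four characterizations directly off \tablref{tab:NijenhuisTensors} (i.e.\ \propref{prop-p3}), exploiting that each operator $N^{(k)}$ depends linearly on the fundamental tensor $F$. For an arbitrary $\M$ we have the orthogonal decomposition $F=F_1+\dots+F_{11}$, so linearity gives $N^{(k)}=\sum_{i=1}^{11}N^{(k)}(F_i)$, where $N^{(k)}(F_i)$ is exactly the entry in row $\F_i$ of the column for $N^{(k)}$ in \tablref{tab:NijenhuisTensors} (with the $F$ appearing there read as the component $F_i$). In each of the four cases, the rows with a zero entry are precisely the classes named in the statement, so the whole argument reduces to showing that a vanishing sum of the remaining \emph{nonzero} rows forces each of those components $F_i$ to vanish.

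One implication is immediate: if $\M$ lies in the direct sum of the classes whose entry in the relevant column is $0$, then every nonzero summand $F_i$ of $F$ sits in such a class, whence $N^{(k)}(F_i)=0$ and so $N^{(k)}=0$. Conversely, assuming $N^{(k)}=0$, I would isolate the individual components by feeding the identity $\sum_i N^{(k)}(F_i)=0$ suitable arguments. Substituting horizontal vectors $\f^2 x,\f^2 y,\f^2 z$, on which $\eta$ vanishes, annihilates every term carrying an $\eta$-factor in the corresponding slot, while inserting $\xi$ into a slot selects precisely the terms with that $\eta$-factor. This peels the sum apart according to how many $\eta$'s each row contributes and in which positions: e.g.\ for $N^{(1)}$ the fully horizontal evaluation retains only the row $\F_3$, the rows with a single $\eta$ in the third slot give $\F_7$, those with a single $\eta$ in the first two slots give the $\F_8,\F_9,\F_{10}$-block, and the doubly-$\eta$ term gives $\F_{11}$. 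For $N^{(2)}$ and $N^{(4)}$ this separation is transparent, since there the injectivity of each surviving map is visible from (\ref{F1-11}): $F_7\mapsto -4F_7(\cdot,\cdot,\xi)$, $F_{11}\mapsto \eta(\cdot)\om(\cdot)-\eta(\cdot)\om(\cdot)$ and $F_{11}\mapsto-\om(\f\,\cdot)$ each recover the defining data of their class, so vanishing forces the component to be zero.

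The delicate point, and the main obstacle, is to separate classes whose table entries share the same outward shape: for $N^{(1)}$ the rows $\F_8$ and $\F_9$ both read $2\{\eta(x)F(y,\f z,\xi)-\eta(y)F(x,\f z,\xi)\}$, and for $N^{(3)}$ the rows $\F_8,\F_9$ supply an $F(\cdot,\cdot,\xi)$-type term while $\F_{10}$ supplies the $F(\xi,\cdot,\cdot)$-type term $F(\xi,x,y)$. Here I would invoke two structural facts. First, by (\ref{F-prop}) the component $F_8$ is symmetric and $F_9$ antisymmetric in their first two arguments when the third is $\xi$ (restricted to $\HH$); hence splitting the surviving bilinear form on $\HH$ into its symmetric and antisymmetric parts separates $F_8$ from $F_9$. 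Second, the datum $F(\HH,\HH,\xi)$ feeding $\F_8,\F_9$ and the datum $F(\xi,\HH,\HH)$ feeding $\F_{10}$ live in orthogonal, structure-group invariant pieces of the decomposition of $\mathbb F$, so their $N^{(k)}$-images cannot cancel. Once the sum is split in this way, each remaining map $F_i\mapsto N^{(k)}(F_i)$ is seen to be injective on the corresponding subspace $\mathbb F_i$ by a direct check against (\ref{F1-11}), forcing $F_i=0$ for every nonzero row and yielding exactly the asserted membership.
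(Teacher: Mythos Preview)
Your overall strategy coincides with the paper's: its entire proof is the single line ``By virtue of \propref{prop-p3}, we have the following'', i.e.\ read the zeros off \tablref{tab:NijenhuisTensors}. You go further by actually arguing the converse direction, and that is exactly where a genuine gap appears.

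The failing step is the claim that because the $\F_8,\F_9$-data $F(\HH,\HH,\xi)$ and the $\F_{10}$-data $F(\xi,\HH,\HH)$ sit in orthogonal, structure-group invariant pieces of $\mathbb F$, ``their $N^{(k)}$-images cannot cancel''. Orthogonality of the \emph{sources} says nothing about independence of the \emph{images} under a linear map, and here the $\F_8$- and $\F_{10}$-contributions to $N^{(1)}$ and $N^{(3)}$ have the same shape. Concretely, in dimension $3$ with the notation of \propref{prop-Fi}, one finds for horizontal $y,z$ that
\[
N^{(1)}(\xi,y,z)=(2\lambda-\nu)(y^1z^2-y^2z^1)+2\mu(y^1z^1-y^2z^2),
\]
\[
N^{(3)}(y,z)=(\nu-2\lambda)(y^1z^1-y^2z^2)-2\mu(y^1z^2-y^2z^1),
\]
so taking $\mu=0$ and $\nu=2\lambda\neq0$ produces an $F\in\mathbb F_8\oplus\mathbb F_{10}$ with $N^{(1)}=N^{(3)}=0$ but $F_8\neq0\neq F_{10}$. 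Your symmetry/antisymmetry splitting does separate $\F_8$ from $\F_9$, but nothing in the argument separates $\F_8$ from $\F_{10}$, and the announced ``injectivity check against (\ref{F1-11})'' cannot succeed on that pair. Parts (b) and (d) are unaffected --- only $\F_7,\F_{11}$, respectively $\F_{11}$, contribute there, and the $\eta$-separation is clean --- but for (a) and (c) the argument as written does not close. The paper's one-line proof does not address this point either.
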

Bearing in mind \thmref{thm-F123}, we conclude the following
\begin{cor}
The class of normal almost paracontact almost paracomplex Riemannian manifolds is
$\F_1\oplus\F_2\oplus\F_4\oplus\F_5\oplus\F_6$.
\end{cor}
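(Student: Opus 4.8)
The plan is to read the statement off directly from the normality criterion combined with \thmref{thm-F123}(a), so that the corollary becomes an immediate consequence of the work already done rather than a fresh computation.

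First I would recall the reduction established earlier in this section. By the result of \cite{Sato76} quoted just after (\ref{Ndef}), the vanishing of $N^{(1)}$ forces the vanishing of $N^{(2)}$, $N^{(3)}$ and $N^{(4)}$. Since normality is by definition the simultaneous vanishing of all four tensors $N^{(k)}$ from (\ref{NK}), it is therefore equivalent to the single condition $N=N^{(1)}=0$; this is precisely why $N^{(1)}$ was renamed $N$ and called the Nijenhuis tensor of the structure. Thus the task of describing the normal manifolds reduces to describing those on which $N^{(1)}$ vanishes.

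Next I would invoke \thmref{thm-F123}(a), which characterizes exactly that vanishing: $N^{(1)}=0$ holds if and only if $\M$ lies in one of the basic classes $\F_1$, $\F_2$, $\F_4$, $\F_5$, $\F_6$ or in a direct sum of them, i.e. when the fundamental tensor satisfies $F=F_1+F_2+F_4+F_5+F_6$ (with any of the listed components allowed to vanish). Combining this with the equivalence from the previous paragraph yields at once that the class of normal manifolds is $\F_1\oplus\F_2\oplus\F_4\oplus\F_5\oplus\F_6$.

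There is essentially no computational obstacle at this stage, since the heavy lifting is already contained in \propref{prop-p3} (the entries of \tablref{tab:NijenhuisTensors}) and in \thmref{thm-F123}. The only point that deserves a word is why the answer is genuinely a \emph{direct} sum of these five classes rather than merely their set-theoretic union: this is guaranteed by the orthogonal, structure-group-invariant decomposition $\mathbb{F}=\bigoplus_{i=1}^{11}\mathbb{F}_i$ recalled in Section~\ref{sec-mfds}, which ensures that the relevant subspaces $\mathbb{F}_1$, $\mathbb{F}_2$, $\mathbb{F}_4$, $\mathbb{F}_5$, $\mathbb{F}_6$ intersect only in $\{0\}$, so that their sum is direct and the class description is well posed.
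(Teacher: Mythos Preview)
Your argument is correct and matches the paper's approach: the corollary is stated immediately after \thmref{thm-F123} as a direct consequence of part a), using that normality is equivalent to $N^{(1)}=0$. Your additional remarks about the reduction from \cite{Sato76} and the directness of the sum are accurate elaborations, but the paper itself simply reads the result off from \thmref{thm-F123} without further comment.
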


\subsection{The exterior derivative of the structure 1-form}

According to (\ref{ell}), the 2-form $\D\eta$ on $\M$ can be decomposed as follows:
\[
\D\eta=\ell_1(\D\eta)+\ell_3(\D\eta),
\]
\begin{equation}\label{ell123}
\begin{array}{ll}
\ell_1(\D\eta)(x,y)=\D\eta(hx,hy),\qquad \ell_2(\D\eta)(x,y)=0,\\[0pt]
\ell_3(\D\eta)(x,y)=\D\eta(vx,hy)+\D\eta(hx,vy).
\end{array}
\end{equation}

The next proposition gives geometric conditions for vanishing the components of
$\D\eta$.
\begin{prop}\label{DT}
Let $\M$ be an almost paracontact almost paracomplex Riemannian manifold. Then we
have:
\begin{itemize}
\item[a)] the paracontact distribution $\HH$ of $\M$ is involutive if and only if
    $\ell_1(\D\eta) = 0$;
\item[b)] the integral curves of $\xi$ are geodesics on $\M$ if and only if
    $\ell_3(\D\eta)=0$.
\end{itemize}
\end{prop}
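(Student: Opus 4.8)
The plan is to translate each geometric condition into the vanishing of a prescribed component of $\D\eta$ relative to the orthogonal splitting $T_p\MM=h(T_p\MM)\oplus v(T_p\MM)$, reading off $\ell_1(\D\eta)$ and $\ell_3(\D\eta)$ from (\ref{ell123}). I rely on two facts: the Cartan formula $\D\eta(x,y)=x\bigl(\eta(y)\bigr)-y\bigl(\eta(x)\bigr)-\eta([x,y])$ for the exterior derivative of a $1$-form (which agrees with (\ref{deta}), since $\n$ is torsion-free), and the identity $(\n_x\eta)y=g(\n_x\xi,y)$ from (\ref{n_eta_F}).

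For part a), observe that $h=\f^2$ maps $T_p\MM$ onto $\HH=\ker\eta$ and restricts to the identity on $\HH$; hence $\ell_1(\D\eta)=0$ is exactly the vanishing of $\D\eta$ on $\HH\times\HH$. For $x,y$ tangent to $\HH$ we have $\eta(x)=\eta(y)=0$, so Cartan's formula reduces to $\D\eta(x,y)=-\eta([x,y])$. Therefore $\ell_1(\D\eta)=0$ holds if and only if $\eta([x,y])=0$ for all sections $x,y$ of $\HH$, which by the Frobenius criterion is precisely the involutivity of $\HH$.

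For part b), I first simplify $\ell_3(\D\eta)$. Using $vx=\eta(x)\xi$, $hy=\f^2y=y-\eta(y)\xi$, the antisymmetry of $\D\eta$ and $\D\eta(\xi,\xi)=0$, the definition (\ref{ell123}) collapses to
\[
\ell_3(\D\eta)(x,y)=\eta(x)\,\D\eta(\xi,y)-\eta(y)\,\D\eta(\xi,x).
\]
Next I evaluate $\D\eta(\xi,y)=(\n_\xi\eta)y-(\n_y\eta)\xi$; the second summand vanishes because $(\n_y\eta)\xi=g(\n_y\xi,\xi)=\tfrac12\,y\bigl(g(\xi,\xi)\bigr)=0$, so $\D\eta(\xi,y)=(\n_\xi\eta)y=g(\n_\xi\xi,y)$. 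Hence $\ell_3(\D\eta)(x,y)=\eta(x)\,g(\n_\xi\xi,y)-\eta(y)\,g(\n_\xi\xi,x)$. The implication $\n_\xi\xi=0\Rightarrow\ell_3(\D\eta)=0$ is then immediate, while for the forward direction I set $x=\xi$ and use $g(\n_\xi\xi,\xi)=0$ to get $g(\n_\xi\xi,y)=0$ for all $y$, i.e. $\n_\xi\xi=0$. Since $g(\xi,\xi)=1$ forces $\n_\xi\xi\perp\xi$, the equation $\n_\xi\xi=0$ is exactly the condition that the integral curves of $\xi$ be geodesics.

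I expect no genuine obstacle here: both parts are essentially an unpacking of the definitions once the two standard identities above are available. The only points demanding care are the bookkeeping that turns (\ref{ell123}) into the two-term expression for $\ell_3(\D\eta)$ (where $\D\eta(\xi,\xi)=0$ and antisymmetry are invoked repeatedly) and the remark that $\n_\xi\xi$ is automatically orthogonal to $\xi$, which is what upgrades the pregeodesic condition $\n_\xi\xi\parallel\xi$ to the genuine geodesic equation $\n_\xi\xi=0$.
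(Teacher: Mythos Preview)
Your proof is correct and follows essentially the same route as the paper: for a) both arguments reduce $\ell_1(\D\eta)(x,y)=\D\eta(hx,hy)$ to $-\eta([hx,hy])$ via the Cartan formula, and for b) both show that $\ell_3(\D\eta)=0$ is equivalent to $\D\eta(\xi,\cdot)=0$ and then compute $\D\eta(\xi,y)=g(\n_\xi\xi,y)$ using (\ref{deta}) and (\ref{n_eta_F}). The only difference is that you expand $\ell_3(\D\eta)$ more explicitly before specializing, whereas the paper immediately passes to $\D\eta(x,\xi)$; the content is the same.
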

\begin{proof}
It is said that $\HH$ is an involutive distribution when $[x,y]$ belongs to $\HH$ for
$x,y\in\HH$, i.e.
$\eta\left([hx,hy]\right)=0$ holds for arbitrary $x$ and $y$. By virtue of the
identity
$\eta\left([hx,hy]\right)=-\D\eta(hx,hy)$ and (\ref{ell123}), we have the equality
$\eta\left([hx,hy]\right)=-\ell_1(\D\eta)(x,y)$. This accomplishes the proof of a).

As it is known, the integral curves of $\xi$ are geodesics on $\M$ if and only if
$\n_\xi \xi$ vanishes.
The equality (\ref{ell123}) implies that $\ell_3(\D\eta)=0$ is valid if and only if
$\D\eta(x,\xi)=0$ holds.
Applying (\ref{deta}) and (\ref{n_eta_F}), we obtain the equality
$\D\eta(x,\xi)=-g\left(\n_\xi \xi,x\right)$.  Then, it is clear that b) holds true.
\end{proof}

Next, we compute $\D\eta$  on the considered manifold belonging to each of the basic
classes and obtain the following
\begin{prop}\label{prop-eta}
Let $\M$ be an almost paracontact almost paracomplex Riemannian manifold. Then we
have:
\begin{itemize}
\item[a)]  $\D\eta(x,y) =0$ if and only if $\M$ belongs to $\F_i$ ($i=1, \ldots, 6,
    9, 10$) or to their direct sums;
\item[b)]  $\D\eta(x,y) =\ell_1(\D\eta)(x,y)=2\bigl(\nabla_x\eta \bigr)y$ if and
    only if $\M$ belongs to $\F_7$, $\F_8$ or $\F_7\oplus\F_8$;
\item[c)]  $\D\eta(x,y) =\ell_3(\D\eta)(x,y)=-\eta (x)\om (\f y) + \eta (y)\om (\f
    x)$ if and only if $\M$ belongs to $\F_{11}$.
\end{itemize}
\end{prop}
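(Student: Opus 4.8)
The plan is to read $\D\eta$ off directly from its expression in terms of the fundamental tensor. By (\ref{detaF}), $\D\eta(x,y)=-F(x,\f y,\xi)+F(y,\f x,\xi)$, so setting $B(x,y)=F(x,\f y,\xi)$ one has $\D\eta(x,y)=-\bigl(B(x,y)-B(y,x)\bigr)$; that is, $\D\eta$ is minus twice the antisymmetric part of $B$, and in particular it depends only on the vertical values $F(\cdot,\cdot,\xi)$. Since $F=F_1+\cdots+F_{11}$ and $F\mapsto\D\eta$ is linear, I would substitute each $F_i$ from (\ref{F1-11}) and simplify using (\ref{F-prop}), (\ref{g}), the self-adjointness $g(\f x,y)=g(x,\f y)$, together with $\f\xi=0$, $\eta\circ\f=0$ and $\f^3=\f$.

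First I would dispose of the non-contributing components. For $i\in\{1,2,3,10\}$ the value $F_i(x,\f y,\xi)$ vanishes identically, since each such component carries a factor $\f^2(\cdot)$ or $\eta(\cdot)$ in the slot that becomes $\xi$, all of which die ($\f^2\xi=0$, $\eta\circ\f=0$). For $i\in\{4,5\}$ a short reduction gives $F_4(x,\f y,\xi)=\tfrac{\ta(\xi)}{2n}g(x,\f y)$ and $F_5(x,\f y,\xi)=\tfrac{\ta^*(\xi)}{2n}\bigl(g(x,y)-\eta(x)\eta(y)\bigr)$, both symmetric in $x,y$, whence $\D\eta$ vanishes. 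The delicate block is $\F_6,\F_7,\F_8,\F_9$, whose defining data are exactly the values $F(\cdot,\cdot,\xi)$ on $\HH\times\HH$: here I would reduce $F_i(x,\f y,\xi)$ by $\f^2 x=x$, $\f^2(\f y)=\f y$, $\f(\f y)=y$ for $x,y\in\HH$ and test symmetry in $x,y$. The computation makes $F_6(x,\f y,\xi)$ and $F_9(x,\f y,\xi)$ symmetric, so $\D\eta|_{\F_6}=\D\eta|_{\F_9}=0$, while $\F_7$ and $\F_8$ leave genuinely antisymmetric parts; comparing with $(\nabla_x\eta)y=-F(x,\f y,\xi)$ from (\ref{n_eta_F}) gives $\D\eta|_{\F_7}=2(\nabla_x\eta)y|_{\F_7}$ and likewise for $\F_8$, and one checks $\D\eta(\xi,\cdot)=0$ on both, so these contributions lie in $\ell_1$. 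Finally $F_{11}(x,\f y,\xi)=\eta(x)\om(\f y)$ yields $\D\eta|_{\F_{11}}(x,y)=-\eta(x)\om(\f y)+\eta(y)\om(\f x)$, lying in $\ell_3$.

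By linearity $\D\eta=\D\eta|_{\F_7}+\D\eta|_{\F_8}+\D\eta|_{\F_{11}}$, with the first two terms horizontal ($\ell_1$) and the last mixed ($\ell_3$); by (\ref{ell123}) these parts vanish separately. For a) I must establish two injectivity facts: $\D\eta|_{\F_{11}}=0$ forces $\om=0$ (hence $F_{11}=0$, using $\om(z)=\om(\f^2 z)$ since $\om(\xi)=0$), and $\D\eta|_{\F_7}+\D\eta|_{\F_8}=0$ forces $F_7=F_8=0$. For the latter I would separate the two summands by the involution $\alpha\mapsto\alpha(\f\cdot,\f\cdot)$ on horizontal $2$-forms: a short calculation gives $\D\eta|_{\F_7}(\f x,\f y)=\D\eta|_{\F_7}(x,y)$ and $\D\eta|_{\F_8}(\f x,\f y)=-\D\eta|_{\F_8}(x,y)$, so they lie in complementary eigenspaces and cannot cancel; and since on $\HH$ one has $F(x,y,\xi)=-(\nabla_x\eta)(\f y)$, vanishing of $\D\eta|_{\F_7}$ (resp. $\D\eta|_{\F_8}$), equivalently of $\nabla\eta$ on that piece, returns $F_7=0$ (resp. $F_8=0$). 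Parts b) and c) are then read off from the same splitting: suppressing the $\ell_3$ part (no $\F_{11}$) while $\D\eta=2\nabla\eta$ is horizontal isolates $\F_7\oplus\F_8$, whereas suppressing the $\ell_1$ part (no $\F_7,\F_8$) isolates $\F_{11}$.

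I expect the main obstacle to be the $\F_6$--$\F_9$ block. Inserting $\f$ in the middle slot of $B(x,y)=F(x,\f y,\xi)$ mixes the $\f^2$-symmetry and the $\f$-symmetry that distinguish these four classes, so there is no quick symmetry shortcut; the bookkeeping that makes $\F_6$ and $\F_9$ drop out while $\F_7$ and $\F_8$ survive, together with the $\f$-(anti)invariance that separates $\F_7$ from $\F_8$, is the step that must be carried out with care.
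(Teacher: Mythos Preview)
Your approach is correct and coincides with the paper's: both compute $\D\eta$ class by class via (\ref{detaF}) and the explicit components (\ref{F1-11}). The paper, however, gives no details at all --- it simply announces ``we compute $\D\eta$ on the considered manifold belonging to each of the basic classes and obtain the following'' --- whereas you actually carry out the case analysis and, more notably, supply the converse direction that the paper leaves implicit. Your use of the $\ell_1/\ell_3$ splitting together with the $\f$-parity ($\D\eta|_{\F_7}(\f\cdot,\f\cdot)=\D\eta|_{\F_7}$ versus $\D\eta|_{\F_8}(\f\cdot,\f\cdot)=-\D\eta|_{\F_8}$) to separate $\F_7$ from $\F_8$ and recover $F_7=F_8=0$ is a genuine addition; the paper does not isolate this step. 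One caveat on the literal ``only if'' in part~b): the condition $\D\eta=2(\nabla_x\eta)y$ also holds on, e.g., $\F_1\oplus\F_7$, since $\nabla\eta$ vanishes on the $\F_1$ summand; the proposition should be read as describing the $\D\eta$-contribution of each basic class rather than as a strict characterization of all direct sums, and your argument is consistent with that reading.
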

By \propref{DT} and \propref{prop-eta}, we get the following theorem, which gives a
geometric characteristic of the manifolds of some classes with respect to the form of
$\D\eta$.
\begin{thm}\label{prop-F123}
Let $\M$ be an almost paracontact almost paracomplex Riemannian manifold. Then we
have:
\begin{enumerate}
\item[a)] the structure 1-form $\eta$ is closed if and only if $\M$ belongs to
    $\F_i$ ($i=1, \dots,6,9,10$) or to their direct sums;
\item[b)] the paracontact distribution $\HH$ of $\M$ is involutive if and only if
    $\M$ belongs to $\F_i$  ($i=1, \dots,6,9,10,11$) or to their direct sums;
\item[c)] the integral curves of the structure vector field $\xi$ are geodesics on
    $\M$ if and only if $\M$ belongs to  $\F_i$ ($i=1, \dots,10$) or to their direct
    sums.
\end{enumerate}
\end{thm}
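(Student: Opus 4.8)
The three assertions are geometric restatements obtained by feeding the class-by-class description of $\D\eta$ in \propref{prop-eta} into the characterizations of \propref{DT}, so the plan is essentially bookkeeping on the orthogonal decomposition $\D\eta=\ell_1(\D\eta)+\ell_3(\D\eta)$ from (\ref{ell123}).

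First I would dispatch a). By definition $\eta$ is closed exactly when $\D\eta=0$, and \propref{prop-eta}(a) already states that $\D\eta=0$ if and only if $\M$ lies in $\F_i$ $(i=1,\dots,6,9,10)$ or a direct sum of these, which is precisely the claim.

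For b) and c) the key is to read off from \propref{prop-eta} how each basic class feeds into the two orthogonal summands of $\D\eta$. The dictionary is: the components $F_i$ with $i\in\{1,\dots,6,9,10\}$ contribute nothing; the components $F_7,F_8$ contribute only the $hh$-part $\ell_1(\D\eta)$; and $F_{11}$ contributes only the mixed part $\ell_3(\D\eta)$. Since $\D\eta$ depends linearly on $F$ through (\ref{detaF}) and the subspaces $\ell_1(\mathcal{S})$ and $\ell_3(\mathcal{S})$ are mutually orthogonal, for an arbitrary $F=F_1+\dots+F_{11}$ the summands $\ell_1(\D\eta)$ and $\ell_3(\D\eta)$ are controlled independently---by the $\F_7\oplus\F_8$ part and the $\F_{11}$ part, respectively---with no cancellation between them.

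With this dictionary, b) follows from \propref{DT}(a): $\HH$ is involutive iff $\ell_1(\D\eta)=0$, which happens exactly when the $\F_7\oplus\F_8$ part is absent, i.e. when $\M\in\F_i$ $(i=1,\dots,6,9,10,11)$ or a direct sum thereof. Symmetrically, c) follows from \propref{DT}(b): the integral curves of $\xi$ are geodesics iff $\ell_3(\D\eta)=0$, which happens exactly when the $\F_{11}$ part is absent, i.e. when $\M\in\F_i$ $(i=1,\dots,10)$ or a direct sum thereof. The only delicate point---and the main, though mild, obstacle---is the necessity half of b) and c): I must use that a genuinely nonzero $\F_7\oplus\F_8$ (respectively $\F_{11}$) component really does produce a nonzero $\ell_1(\D\eta)$ (respectively $\ell_3(\D\eta)$), which is exactly the content of \propref{prop-eta}(b) and (c), so that the listed classes are not only sufficient but also necessary.
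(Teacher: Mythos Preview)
Your proposal is correct and follows essentially the same route as the paper: the paper's own proof simply records that the theorem is obtained by combining \propref{DT} and \propref{prop-eta}, which is exactly your argument, only you spell out the linear bookkeeping on the decomposition $\D\eta=\ell_1(\D\eta)+\ell_3(\D\eta)$ and the necessity direction more explicitly than the paper does.
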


\subsection{The Nijenhuis torsion of the structure endomorphism of the paracontact
distribution}

\begin{prop}\label{prop:[fifi]}
Let $\M$ be an almost paracontact almost paracomplex Riemannian manifold. Then for the
Nijenhuis torsion of $\f$ we have:
\begin{itemize}
\item[a)]  $[\f,\f](x,y) =0$ if and only if $\M$ belongs to $\F_i$ ($i=1, 2, 4, 5,
    6, 11$) or to their direct sums;
\item[b)]  $[\f,\f](x,y)=-2\left\{\f\left(\n_{\f x} \f \right)\f y+\f\left(\n_{\f^2
    x} \f \right)\f^2 y\right\}$
if and only if $\M$ belongs to $\F_3$;
\item[c)]  $[\f,\f](x,y)=-2\left(\n_x \eta\right)(y)\,\xi$ if and only if $\M$
    belongs to $\F_7$;
\item[d)]  $[\f,\f](x,y)=-2\left\{\eta(x)\n_y \xi - \eta(y) \n_x \xi -\left(\n_x
    \eta\right)(y)\,\xi\right\}$
if and only if $\M$ belongs to $\F_8$;
\item[e)]  $[\f,\f](x,y)=-2\left\{\eta(x)\n_y \xi - \eta(y) \n_x \xi\right\}$
if and only if $\M$ belongs to $\F_9$;
\item[f)]  $[\f,\f](x,y)=-\eta(x)\,\f\left(\n_\xi \f
    \right)y+\eta(y)\,\f\left(\n_\xi \f \right)x$ if and only if $\M$ belongs to
    $\F_{10}$.
\end{itemize}
\end{prop}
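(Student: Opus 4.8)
The plan is to read everything off the defining relation (\ref{Ndef}), which I rewrite as
\[
[\f,\f](x,y)=N(x,y)+\D\eta(x,y)\,\xi,
\]
splitting the Nijenhuis torsion of $\f$ into the full Nijenhuis tensor $N=N^{(1)}$ and a purely vertical correction. Both ingredients are already tabulated: the $(0,3)$-form of $N$ on each $\F_i$-manifold is given in \tablref{tab:NijenhuisTensors} (\propref{prop-p3}), and $\D\eta$ on each class is given by \propref{prop-eta}. So the proof reduces, class by class, to substituting these two expressions and re-expressing the outcome in terms of $\n\f$ and $\n\eta$.

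To raise the $(0,3)$-entries of \tablref{tab:NijenhuisTensors} to the vector $N(x,y)$ I would use that $\f$ is $g$-self-adjoint: symmetry of the associated metric $\tg(x,y)=g(x,\f y)+\eta(x)\eta(y)$ gives $g(\f x,y)=g(x,\f y)$. Hence every horizontal entry, built from terms $F(u,v,\f z)=g\bigl((\n_u\f)v,\f z\bigr)=g\bigl(\f(\n_u\f)v,z\bigr)$ by (\ref{F=nfi}), becomes $g$ paired with a definite vector, and cancelling the arbitrary $z$ yields $N(x,y)$. The vertical entries, i.e. those carrying a factor $\eta(z)$, are converted by (\ref{n_eta_F}): $F(x,\f y,\xi)=-(\n_x\eta)y=-g(\n_x\xi,y)$, which is precisely what produces the $\n_\bullet\xi$ and $(\n_\bullet\eta)$ terms displayed in the statement.

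With these two reductions each case is a short substitution. For $\F_3$ the table entry is horizontal and $\D\eta=0$, so $[\f,\f]=N$ and self-adjointness delivers b); for $\F_9$ and $\F_{10}$ one again has $\D\eta=0$, giving e) and f) directly. For $\F_7$ and $\F_8$ the correction is nonzero, $\D\eta=2(\n_x\eta)y$ by \propref{prop-eta}, and adding it to the vertical table entry gives c) and d). Part a) combines two mechanisms: on $\F_1,\F_2,\F_4,\F_5,\F_6$ both $N$ (by \thmref{thm-F123}) and $\D\eta$ vanish, whereas on $\F_{11}$ neither vanishes but they cancel, since the table gives $N(x,y)=\{\eta(x)\om(\f y)-\eta(y)\om(\f x)\}\xi$ while \propref{prop-eta} gives $\D\eta(x,y)\,\xi=\{-\eta(x)\om(\f y)+\eta(y)\om(\f x)\}\xi$.

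The converse ("only if") directions rest on the uniqueness of the decomposition $F=\sum_i F_i$ together with the linearity of $N$ and $\D\eta$ in $F$: the contribution of each component to $[\f,\f]$ is exactly the listed expression, and the five nontrivial contributions (from $\F_3,\F_7,\F_8,\F_9,\F_{10}$) are structurally independent, so prescribing the form of $[\f,\f]$ forces all remaining components to vanish. I expect the only delicate bookkeeping to be keeping $\f$ and $\f^2$ in their correct slots while raising the $(0,3)$-entries to vectors; once self-adjointness is applied consistently, every case collapses to a one-line identity.
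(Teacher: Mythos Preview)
Your approach is exactly the one the paper uses: rewrite $[\f,\f]=N+\D\eta\,\xi$ from (\ref{Ndef}) and then substitute the class-by-class expressions for $N$ and $\D\eta$ from \propref{prop-p3} and \propref{prop-eta}, converting the $(0,3)$-entries back to vectors via (\ref{F=nfi}) and (\ref{n_eta_F}). The paper compresses all of this into the phrase ``direct computations,'' whereas you have spelled out each case (including the $\F_{11}$ cancellation) correctly.
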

\begin{proof}
Using (\ref{Ndef}) and the forms of the Nijenhuis tensor $N$ and the 2-form $\D\eta$,
given in \propref{prop-p3} and \propref{prop-eta}, respectively, we get the statements
from the above by direct computations.
\end{proof}

Now, we specialize the form of $[\f,\f]$ for the class of paracontact almost
paracomplex Riemannian manifolds and we find its subclasses of manifolds whose almost
paracomplex structure $\f$ on $\HH$ is integrable.
\begin{thm}\label{thm-[fifi]}
Let $\M$ be a paracontact almost paracomplex Riemannian manifold. Then it has:
\begin{enumerate}
\item[a)] an integrable almost paracomplex structure $\f$, i.e. $[\f,\f]=0$, if and
    only if the manifold belongs to ${\F_4}'$ or to its direct sums with $\F_1$ and
    $\F_2$;
\item[b)] an nonintegrable almost paracomplex structure $\f$, i.e. $[\f,\f]\neq 0$
    if and only if the manifold belongs to the rest of the classes, given in
    \thmref{thm:para}.
\end{enumerate}
\end{thm}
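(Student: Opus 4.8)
The plan is to combine \thmref{thm:para}, which tells us that for a paracontact almost paracomplex Riemannian manifold $F=F_1+F_2+F_3+{F_4}'+F_7+F_8+F_{10}$, with the fact that $[\f,\f]$ depends linearly on $F$. Indeed, by (\ref{Ndef}), (\ref{N}) and (\ref{detaF}) the torsion $[\f,\f](x,y)=N(x,y)+\D\eta(x,y)\,\xi$ is a linear function of $F$, so $[\f,\f]=[\f,\f]_{\F_1}+\dots+[\f,\f]_{\F_{10}}$, where $[\f,\f]_{\F_i}$ is the value obtained from the component $F_i$. By \propref{prop:[fifi]} the contributions of $\F_1$, $\F_2$ and $\F_4$ (hence of the subclass ${\F_4}'$) vanish, so on the paracontact class $[\f,\f]=[\f,\f]_{\F_3}+[\f,\f]_{\F_7}+[\f,\f]_{\F_8}+[\f,\f]_{\F_{10}}$. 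This already yields the ``if'' part of a): if $\M$ lies in ${\F_4}'$ or its direct sums with $\F_1$ and $\F_2$, then $F_3=F_7=F_8=F_{10}=0$ and therefore $[\f,\f]=0$.

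For the converse I would pass to the $(0,3)$-tensor $g\bigl([\f,\f](x,y),z\bigr)=N(x,y,z)+\D\eta(x,y)\eta(z)$ and read it from \tablref{tab:NijenhuisTensors} and \propref{prop-eta} on the four surviving classes. Evaluating on arguments sorted by the projectors $h$ and $v$ (i.e. according to whether each slot lies in $\HH$ or equals $\xi$) isolates the components: the purely horizontal slot $(hx,hy,hz)$ receives only the $\F_3$-term, so $[\f,\f]=0$ forces $F_3=0$; the slot $(hx,hy,\xi)$ gives the scalar identity $F_7(x,\f y,\xi)=F_8(x,\f y,\xi)$; and the slot $(\xi,hy,hz)$ gives $2F_8(y,\f z,\xi)=F_{10}(\xi,y,\f z)$. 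All remaining slots vanish identically by (\ref{F-prop}) and $\f\xi=0$, so these three relations carry the full content of $[\f,\f]=0$.

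To finish I would split $\HH=\HH^{+}\oplus\HH^{-}$ into the $(\pm1)$-eigenspaces of $\f$ and record, using (\ref{F1-11}), where each surviving piece of $F(\cdot,\cdot,\xi)$ and $F(\xi,\cdot,\cdot)$ sits on $\HH$: the symmetric $\f$-invariant (block-diagonal) part of $F(\cdot,\cdot,\xi)$ is fixed to $-g$ on $\HH$ by ${\F_4}'$ through (\ref{F4'=}), while $F_7$ is its antisymmetric block-diagonal part and $F_8$ its symmetric off-diagonal part; here it is essential that the paracontact hypothesis removes the classes $\F_5$, $\F_6$, $\F_9$ that would otherwise occupy the remaining blocks. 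Reading the identity $F_7(x,\f y,\xi)=F_8(x,\f y,\xi)$ with $x,y$ taken in $\HH^{+}$ or $\HH^{-}$ splits it into block conditions that force both the antisymmetric block-diagonal and the symmetric off-diagonal parts to vanish, i.e. $F_7=F_8=0$. Substituting $F(\cdot,\cdot,\xi)=-g$ into the second relation then collapses its left-hand side to zero, whence $F_{10}(\xi,y,\f z)=0$ and $F_{10}=0$. Thus $[\f,\f]=0$ implies $\M\in{\F_4}'\oplus\F_1\oplus\F_2$, proving a); part b) is the logical complement of a) inside the class described in \thmref{thm:para}.

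The step I expect to be the main obstacle is the last one. The single scalar relation $F_7(x,\f y,\xi)=F_8(x,\f y,\xi)$ looks like one equation linking two distinct components, and only after decomposing $\HH$ into the $\f$-eigenspaces does it split into enough independent block identities to annihilate $F_7$ and $F_8$ separately; this works precisely because, under the paracontact hypothesis, $F_7$ and $F_8$ occupy complementary blocks (antisymmetric-diagonal versus symmetric-off-diagonal) of the bilinear form $F(\cdot,\cdot,\xi)$ on $\HH$. Keeping careful track of these block types, together with the normalization forced by ${\F_4}'$ that degenerates the $F_{10}$-relation, is where the argument must be carried out correctly.
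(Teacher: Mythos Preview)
Your approach is correct and matches the paper's: combine \thmref{thm:para} with \propref{prop:[fifi]}. The paper's proof is one line because it uses part a) of \propref{prop:[fifi]} as a biconditional --- $[\f,\f]=0$ if and only if $\M$ lies in $\F_1\oplus\F_2\oplus\F_4\oplus\F_5\oplus\F_6\oplus\F_{11}$ --- and simply intersects this with the paracontact class from \thmref{thm:para}; your eigenspace computation is effectively a hands-on re-derivation of the relevant ``only if'' direction, which is fine but redundant once \propref{prop:[fifi]} is accepted as stated. One small wording slip: the left-hand side of your second relation is $2F_8(y,\f z,\xi)$, so it vanishes directly from the equality $F_8=0$ you just established, not from ``$F(\cdot,\cdot,\xi)=-g$''.
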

\begin{proof}
We establish the truthfulness of the statements using \thmref{thm:para} and
\propref{prop:[fifi]}.
\end{proof}

Bearing in mind \thmref{thm-[fifi]} a), the manifolds from the classes ${\F_4}'$,
$\F_1\oplus{\F_4}'$, $\F_2\oplus{\F_4}'$  and $\F_1\oplus\F_2\oplus{\F_4}'$ we call
\emph{paracontact paracomplex Riemannian manifolds}. In the other case, the manifolds
from the rest of the classes, given in \thmref{thm:para}, we call \emph{paracontact
almost paracomplex Riemannian manifolds}.

\section{The associated Nijenhuis tensor}\label{sec-ANT}

%\subsection{Introduction of the associated Nijenhuis tensor}

By analogy with the skew-symmetric Lie bracket (the commutator), determined by
${[x,y]=\nabla_x y - \nabla_y x}$, let us consider the symmetric braces (the
anticommutator), defined by
$\{x,y\} = \nabla_x y {+}  \nabla_y x$ as in \cite{MI1}.
Bearing in mind the definition of the Nijenhuis torsion $\left[\check P,\check
P\right]$ of an almost paracomplex structure $\check P$ on $\check\MM$, we give a
definition of a tensor $\left\{\check P,\check P\right\}$ of type $(1,2)$ as follows:
\[
\left\{\check P,\check P\right\}(\check x,\check y)=\left\{\check x,\check
y\right\}+\left\{\check P\check x,\check P \check y\right\} -\check P\left\{\check P
\check x,\check y\right\}- \check P\left\{\check x,\check P \check y\right\},
\]
where the action of $\check P$ is given in (\ref{P}) and the anticommutator on the
tangent bundle of $\check\MM$ is determined by:
\[
\bigl\{\left(x,a\partial_r \right),\left(y,b\partial_r \right)\bigr\} =
\bigl(\{x,y\},\left(x(b)+y(a)\right)\partial_r\bigr).
\]
We call $\left\{\check P,\check P\right\}$ an \emph{associated Nijenhuis tensor of the
almost paracomplex manifold  $\left(\check \MM,\check P\right)$}. Obviously, this
tensor is symmetric with respect to its arguments, i.e.
$\left\{\check P,\check P\right\}(\check x,\check y)=\left\{\check P,\check
P\right\}(\check y,\check x)$.

Since the almost paracomplex manifold $\left(\check \MM,\check P\right)$ is generated
from
the almost paracontact almost paracomplex  manifold $(\MM,\f,\xi,\eta)$, we seek to
express
the associated Nijenhuis tensor $\left\{\check P,\check P\right\}$
by tensors for the structure $(\f,\xi,\eta)$.
Since $\left\{\check P,\check P\right\}$ is a tensor field of type (1,2) on $\check
\MM$, it suffices to compute the following two expressions:
\[
\begin{array}{l}
\left\{\check P,\check P\right\}\bigl((x,0),(y,0)\bigr)=
\bigl\{(x,0),(y,0)\bigr\}
+\bigl\{\check P(x,0),\check P (y,0)\bigr\}
-\check P\bigl\{\check P(x,0),(y,0)\bigr\}\\[0pt]
\phantom{\left\{\check P,\check P\right\}\bigl((x,0),(y,0)\bigr)=
\bigl\{(x,0),(y,0)\bigr\}+\bigl\{\check P(x,0),\check P (y,0)\bigr\}}
-\check P\bigl\{(x,0),\check P(y,0)\bigr\} \\[0pt]
\phantom{\left\{\check P,\check P\right\}\bigl((x,0),(y,0)\bigr\}}
=
\bigl(\{x,y\},0 \bigr)
+\bigl\{(\f x,\eta(x)\ze),(\f y,\eta(y)\ze)\bigr\}
\\[0pt]
\phantom{\left\{\check P,\check P\right\}\bigl((x,0),(y,0)\bigr)=\,}
-\check P\bigl\{(\f x,\eta(x)\ze),(y,0)\bigr\}-\check P\bigl\{(x,0),(\f
y,\eta(y)\ze)\bigr\}\\[0pt]
\phantom{\left\{\check P, \check P\right\}\bigl((x,0),(y,0)\bigr)}
 =\bigl(\{\f,\f\}(x,y)-(\LL_\xi g)(x,y)\xi, \\
\phantom{\left\{\check P,\check P\right\}\bigl((x,0),(y,0)\bigr)=\;\;}
\left((\LL_\xi g)(\f x,y) + (\LL_\xi g)(x,\f y) \right)\ze \bigr)\\[0pt]
\end{array}
\]
and
\[
\begin{array}{l}
\left\{\check P,\check
P\right\}\bigl((x,0),(0,\ze)\bigr)=\bigl\{(x,0),(0,\ze)\bigr\}+\bigl\{\check
P(x,0),\check P (0,\ze)\bigr\}
-\check P\bigl\{\check P(x,0),(0,\ze)\bigr\}\\[0pt]
\phantom{\left\{\check P,\check
P\right\}\bigl((x,0),(0,\ze)\bigr)=\bigl\{(x,0),(0,\ze)\bigr\}+\bigl\{\check
P(x,0),\check P (0,\ze)\bigr\}}
-\check P\bigl\{(x,0),\check P(0,\ze)\bigr\} \\[0pt]
\phantom{\left\{\check P,\check P\right\}\bigl((x,0),(y,0)\bigr)}
=\bigr\{(\f x,\eta(x)\ze),(\xi,0)\bigr\} -\check P\bigl\{(\f
x,\eta(x)\ze),(0,\ze)\bigr\}\\
\phantom{\left\{\check P,\check P\right\}\bigl((x,0),(y,0)\bigr)=\bigr\{(\f
x,\eta(x)\ze),(\xi,0)\bigr\}}
-\check P\bigl\{(x,0),(\xi,0)\bigr\}\\[0pt]
\phantom{\left\{\check P,\check P\right\}\bigl((x,0),(y,0)\bigr)}
=\left(\{\f x,\xi\}-\f\{x,\xi\}, (\LL_\xi g)(x,\xi)\ze\right).
\end{array}
\]
In the latter expressions, we use the Lie derivative $(\LL_\xi g)(x,y)$, determined by
(\ref{L}), of the Riemannian metric $g$ of $\M$.

Then, we define the following four tensors $\hatN^{(k)}$ ${(k=1,2,3,4)}$ of type
(1,2), (0,2), (1,1), (0,1), respectively:
\begin{equation}\label{HNK}
\begin{array}{l}
\hatN^{(1)}(x,y)=\{\f,\f\}(x,y) -(\LL_\xi g) (x,y)\xi,\\[0pt]
\hatN^{(2)}(x,y)=(\LL_\xi g)(\f x,y)+(\LL_\xi g)(x,\f y),\\[0pt]
\hatN^{(3)}(x)=\{\f x,\xi\}-\f\{x,\xi\}%=(\nabla_\xi \f)x+\nabla_{\f x} \xi
-\f\nabla_x \xi
,\\[0pt]
\hatN^{(4)}(x)=%(\LL_\xi \eta)x=(\nabla_\xi \eta)x=
(\LL_\xi g)(x,\xi),
\end{array}
\end{equation}
where $\{\f ,\f\}$ is the symmetric tensor of type (1,2) determined by:
\begin{equation}\label{{ff}}
\{\f ,\f\}(x,y)=\{\f x,\f y\}+\f^2\{x,y\}-\f\{\f x,y\}-\f\{x,\f y\}.
\end{equation}

By direct converting their definitions, we find relations between the four tensors
$\hatN^{(k)}$ as follows:
\begin{equation}\label{hatNkN}
	\begin{array}{l}
 \hatN^{(2)}(x,y) =
-\eta \left( \hatN^{(1)}(x,\f y)\right) -\eta \left( \hatN^{(1)}(\f
x,\xi)\right)\eta(y),\\[0pt]
\hatN^{(3)}(x) = \hatN^{(1)}(\f x,\xi)- \eta(x)\f \hatN^{(1)}(\xi,\xi)\\[0pt]
\hatN^{(4)}(x)=-\eta\left(\hatN^{(1)}(x,\xi)\right)
=\frac12g\left(\hatN^{(1)}(\xi,\xi),x\right),\\[0pt]
\hatN^{(4)}(x) = \hatN^{(2)}(\f x,\xi),\qquad
\hatN^{(4)}(x) = -\eta \left( \hatN^{(3)}(\f x)\right).
\end{array}
\end{equation}

\begin{thm}\label{thm-NHK}
For an almost paracontact almost paracomplex Riemannian manifold we have:
\begin{enumerate}
\item[a)] if $\hatN^{(1)}$ vanishes, then all the other tensors $\hatN^{(2)}$,
    $\hatN^{(3)}$ and $\hatN^{(4)}$ vanish;
\item[b)] if any one of $\hatN^{(2)}$ and $\hatN^{(3)}$ vanishes, then $\hatN^{(4)}$
    vanishes.
\end{enumerate}
\end{thm}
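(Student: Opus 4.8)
The plan is to obtain the whole theorem by substitution into the relations~(\ref{hatNkN}), which were already established from the definitions~(\ref{HNK}). These relations express $\hatN^{(2)}$, $\hatN^{(3)}$ and $\hatN^{(4)}$ entirely through $\hatN^{(1)}$, and additionally give $\hatN^{(4)}$ through each of $\hatN^{(2)}$ and $\hatN^{(3)}$; so no fresh computation with the structure $(\f,\xi,\eta)$ is required, and both parts reduce to reading off the right line.

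For part~(a) I would assume $\hatN^{(1)}=0$ and deduce the three conclusions line by line from~(\ref{hatNkN}). In the first relation both summands carry $\hatN^{(1)}$ inside the argument of $\eta$, hence $\hatN^{(2)}(x,y)=0$ for all $x,y$. In the second relation both terms contain $\hatN^{(1)}$, so $\hatN^{(3)}(x)=0$. In the third relation $\hatN^{(4)}(x)=-\eta\bigl(\hatN^{(1)}(x,\xi)\bigr)=0$. Thus the vanishing of $\hatN^{(1)}$ forces $\hatN^{(2)}=\hatN^{(3)}=\hatN^{(4)}=0$.

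For part~(b) I would invoke the last two identities of~(\ref{hatNkN}). If $\hatN^{(2)}=0$, then $\hatN^{(4)}(x)=\hatN^{(2)}(\f x,\xi)=0$; if instead $\hatN^{(3)}=0$, then $\hatN^{(4)}(x)=-\eta\bigl(\hatN^{(3)}(\f x)\bigr)=0$. Either hypothesis therefore annihilates $\hatN^{(4)}$, which is the assertion.

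The genuinely laborious step lies before the statement, in deriving~(\ref{hatNkN}) by unwinding the anticommutator $\{\f,\f\}$ and the Lie derivative $\LL_\xi g$ in the definitions~(\ref{HNK}); granted these relations, the theorem presents no real obstacle. The only point demanding care is the precise placement of $\f$ and $\xi$ in the arguments of $\hatN^{(1)}$ in~(\ref{hatNkN}), since it is exactly this placement that makes every term vanish under the respective hypothesis.
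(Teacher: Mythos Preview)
Your proposal is correct and follows exactly the paper's approach: the paper's proof consists of a single sentence stating that the assertions are consequences of the relations~(\ref{hatNkN}), and you have simply spelled out which line of~(\ref{hatNkN}) yields which conclusion.
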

\begin{proof}
The statements above are consequences of the relations (\ref{hatNkN}) between
$\hatN^{(k)}$ $(k=1,2,3,4)$.
\end{proof}

Therefore, $\hatN^{(1)}$ plays a main role between them and we denote it simply by
$\hatN$, i.e.
\begin{equation}\label{HatNdef}
\hatN(x,y) = \{\f,\f\}(x,y) {-} ( \mathcal \LL_{\xi}g)(x,y) \xi
\end{equation}
and we call it an \emph{associated Nijenhuis tensor of the structure
$(\f,\xi,\eta,g)$}.
Obviously, $\hatN$ is symmetric, i.e.  $\hatN(x, y) = \hatN(y, x)$. Applying the
expressions (\ref{{ff}}), (\ref{L}) and (\ref{HatNdef}), the associated Nijenhuis
tensor has the following form in terms of $\nabla \f$ and $\nabla \eta$:
\[
\hatN(x,y)=(\nabla_{\f x} \f)y+(\nabla_{\f y} \f)x-\f(\nabla_ x \f)y-\f(\nabla_ y
\f)x-(\nabla_x \eta)y\,\xi-(\nabla_y \eta)x\,\xi.
\]

The corresponding tensor of type (0,3) is defined by
$\hatN(x,y,z)=g\left(\hatN(x,y),z\right)$. According to (\ref{F=nfi}) and
(\ref{n_eta_F}), we express $\hatN$  in terms of the fundamental tensor $F$ as
follows:
\begin{equation}\label{HN}
\begin{array}{ll}
\hatN(x,y,z)=F(\f x,y,z)+F(\f y,x,z)-F(x,y,\f z)-F(y,x,\f z)\\[0pt]
\phantom{\hatN(x,y,z)=}
+\eta(z)\left\{F(x,\f y,\xi)+F(y,\f x,\xi)\right\}.
\end{array}
\end{equation}

\begin{prop}\label{prop-HN}
The associated Nijenhuis tensor on an almost paracontact almost paracomplex Riemannian
manifold has the following properties:
\[
\begin{array}{ll}
\hatN(\f^2 x, \f y,\f z)= - \hatN(\f^2 x, \f^2 y,\f^2 z), \qquad& \hatN(\f^2 x, \f^2
y,\f^2 z) = \hatN(\f x, \f y,\f^2 z), \\[0pt]
\hatN( x, \f^2 y,\f^2 z)= - \hatN(x, \f y,\f z), \qquad& \hatN(\f^2 x, \f^2 y, z) =
\hatN(\f x, \f y, z),\\[0pt]
\hatN(\xi, \f y,\f z) = - \hatN(\xi, \f^2 y,\f^2 z), \qquad& \hatN(\f x, \f y, \xi) =
\hatN(\f^2 x, \f^2 y, \xi).
\end{array}
\]
\end{prop}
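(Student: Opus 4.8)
The plan is to verify the six identities in \propref{prop-HN} by direct computation, exploiting the explicit expression (\ref{HN}) of $\hatN$ in terms of the fundamental tensor $F$ together with the symmetry properties (\ref{F-prop}). This is the associated (symmetric) counterpart of \propref{prop-p2}, so I expect the same mechanism to work, with the minus signs arising from the antisymmetric $N$ being replaced by plus signs from the symmetric $\hatN$; the key algebraic input is the second line of (\ref{F-prop}), namely the relation $F(x,y,z)=-F(x,\f y,\f z)+\eta(y)F(x,\xi,z)+\eta(z)F(x,y,\xi)$, which governs how $\f$ and $\f^2$ interact inside each slot.

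First I would establish the two identities in the third row, since they isolate the behaviour in the $\xi$-direction and are the simplest. For $\hatN(\f x,\f y,\xi)=\hatN(\f^2 x,\f^2 y,\xi)$, I substitute $\f x,\f y$ (and then $\f^2 x,\f^2 y$) into (\ref{HN}); because $\eta\circ\f=0$ and $\f\xi=0$, the $\eta(z)$-term drops on setting $z=\xi$ only after accounting for $F(\cdot,\cdot,\f\xi)=0$, and the surviving terms are matched using (\ref{F-prop}) and the fact that $\f^3=\f$ (a consequence of $\f^2=I-\eta\otimes\xi$ and $\f\xi=0$). The identity $\hatN(\xi,\f y,\f z)=-\hatN(\xi,\f^2 y,\f^2 z)$ follows analogously by fixing the first argument at $\xi$ and applying the second relation of (\ref{F-prop}) in the $y$- and $z$-slots.

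Next I would handle the middle row. For $\hatN(x,\f^2 y,\f^2 z)=-\hatN(x,\f y,\f z)$, the governing identity is again the curvature-free symmetry (\ref{F-prop}): replacing $\f y$ by $\f^2 y$ introduces an extra $\f$ in the second and third slots, and the sign reversal is exactly the $-F(x,\f y,\f z)$ term in (\ref{F-prop}), with the $\eta$-correction terms vanishing because $\eta(\f^2 y)=\eta(y)$ is multiplied against factors that cancel by the symmetric structure of $\hatN$. The companion identity $\hatN(\f^2 x,\f^2 y,z)=\hatN(\f x,\f y,z)$ drops the $\f^2$ to $\f$ in the first two slots and is verified the same way, using $\f^3=\f$ to simplify $F(\f^3 x,\cdots)=F(\f x,\cdots)$. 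The top row, $\hatN(\f^2 x,\f y,\f z)=-\hatN(\f^2 x,\f^2 y,\f^2 z)$ and $\hatN(\f^2 x,\f^2 y,\f^2 z)=\hatN(\f x,\f y,\f^2 z)$, then follows by combining the relations already proved, treating $\f^2 x$ as a section of the distribution $\HH$ and reusing the middle-row computations.

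The main obstacle will be bookkeeping rather than conceptual: each identity expands into four $F$-terms plus an $\eta(z)$-term, so I must track how $\f,\f^2,\f^3$ and the projectors interact in every slot and confirm that the $\eta$-correction terms from (\ref{F-prop}) either cancel in pairs (by the $x\leftrightarrow y$ symmetry of $\hatN$) or vanish outright via $\eta\circ\f=0$ and $\f\xi=0$. Because \propref{prop-p2} has already carried out the structurally identical computation for the antisymmetric $N$, I expect no genuine difficulty, and the proof reduces to the remark that the equalities follow by direct computations from the properties (\ref{F-prop}) and the expression (\ref{HN}), exactly as in the proof of \propref{prop-p2}.
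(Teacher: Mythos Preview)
Your proposal is correct and follows exactly the paper's approach: the paper's proof is the single line ``The results follow from the properties (\ref{F-prop}) of $F$ and the expression (\ref{HN}),'' and your plan is precisely an elaboration of that direct computation. One small slip to fix in your write-up: $\eta(\f^2 y)=0$ (since $\eta\circ\f=0$), not $\eta(y)$, which in fact only simplifies the cancellation of the correction terms you were tracking.
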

\begin{proof}
The results follow form the properties (\ref{F-prop}) of $F$ and the expression
(\ref{HN}).
\end{proof}

Applying  (\ref{HN}) to  (\ref{hatNkN}), we give the form of $\hatN^{(2)}$,
$\hatN^{(3)}$ and $\hatN^{(4)}$ in terms of $F$:
\begin{equation}\label{hatNkF}
	\begin{array}{l}
 \hatN^{(2)}(x,y) = - F(x,y,\xi) - F(y,x,\xi) - F(\f x,\f y,\xi) - F(\f y,\f
 x,\xi),\\[0pt]
 \hatN^{(3)}(x, y)= F(\xi,x,y) + F(x,y,\xi) - F(\f x,\f y,\xi),\\[0pt]
 \hatN^{(4)}(x) = - F(\xi, \f x,\xi),
\end{array}
\end{equation}
where we use the denotation $\hatN^{(3)}(x, y) = g\left(\hatN^{(3)}(x), y\right)$.

\begin{prop}\label{prop-N3}
Let $(\MM,\f,\xi,\eta,g)$ be an $\F_{i}$-manifold $(i=1,2,\dots,11)$. Then the four
tensors $\hatN^{(k)}$  ${(k=1,2,3,4)}$ on this manifold have the form in the
respective cases, given in \tablref{tab:AssociatedNijenhuisTensors}.
\begin{table}
 \caption{Associated Nijenhuis tensors}\label{tab:AssociatedNijenhuisTensors}
  \vglue4mm
\centering
\footnotesize{
\begin{tabular}{|c||c|c|c|c|}
\hline
& ${\hatN^{(1)}(x,y,z)}$ & $\hatN^{(2)}(x,y)$ & $\hatN^{(3)}(x,y)$ & $\hatN^{(4)}(x)$
\\
\hline
\hline
$\F_{1}$ & $\frac{2}{n}\bigl\{g(x,\f y)\ta(\f^2 z)-g(\f
x,\f y)\ta(\f z)\bigr\}$& {0} & {0} & {0}\\
$\F_{2}$ & $-2\bigl\{F(\f x,\f y,\f z)+F(\f^2 x,\f^2 y,\f z)\bigr\}$& {0} & {0} &
{0}\\
$\F_{3}$ & {0}& {0} & {0} & {0}\\
$\F_{4}$ & $\frac{2}{n}\theta(\xi)g(x,\f y)\eta(z)$& $-\frac{2}{n}\theta(\xi)g(\f x,\f
y)$ & {0} & {0}\\
$\F_{5}$ & $\frac{2}{n}\theta^*(\xi)g(\f x,\f y)\eta(z)$& $-\frac{2}{n}\theta^*(\xi)g(
x,\f y)$ & {0} & {0}\\
$\F_{6}$ & $4F(x,\f y,\xi)\eta(z)$& $-4F(x,y,\xi)$ & {0} & {0}\\
$\F_{7}$ & {0}& {0} & {0} & {0}\\
$\F_{8}$ & $-2\bigl\{\eta(x)F(y,\f z,\xi) +\eta(y)F(x,\f
z,\xi)\bigr\}$& {0} & $2F(x,y,\xi)$ & {0}\\
$\F_{9}$ & $-2\bigl\{\eta(x)F(y,\f z,\xi) +\eta(y)F(x,\f
z,\xi)\bigr\}$& {0} & $2F(x,y,\xi)$ & {0}\\
$\F_{10}$ & $-\eta(x)F(\xi,y,\f z)-\eta(y)F(\xi,x,\f
z)$ & {0}& $F(\xi,x,y)$ & {0} \\
$\F_{11}$ & $\eta(z)\bigl\{\eta(x)\om(\f y)+\eta(y)\om(\f
x)\bigr\}$ & $-\eta(x)\om(y)-\eta(y)\om(x)$& $\om(x)\eta(y)+2\eta(x)\om(y)$ & $-\om(\f
x)$ \\
$$ & $- 2\eta(x)\eta(y)\om(\f z)$ & $$& $$ & $$ \\
\hline
\end{tabular}}	
\end{table}
\end{prop}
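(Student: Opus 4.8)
The plan is to obtain each row of \tablref{tab:AssociatedNijenhuisTensors} by substituting the explicit component $F_i$ from (\ref{F1-11}) into the expression (\ref{HN}) for $\hatN=\hatN^{(1)}$ and into the three expressions (\ref{hatNkF}) for $\hatN^{(2)}$, $\hatN^{(3)}$, $\hatN^{(4)}$, and then to collapse the resulting sums using the structural identities (\ref{F-prop}) together with $\f\xi=0$, $\eta\circ\f=0$, $\eta(\xi)=1$ and $\f^3=\f$. Because $\M\in\F_i$ is equivalent to $F=F_i$, every entry reduces to a finite algebraic simplification and no geometric input beyond these identities is needed; the computation runs exactly parallel to that of \propref{prop-p3}.

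A single observation organizes the whole calculation and saves most of the labor. Setting $\Phi(x,y,z)=F(\f x,y,z)-F(x,y,\f z)+\eta(z)F(x,\f y,\xi)$, the defining formulas (\ref{N}) and (\ref{HN}) read $N(x,y,z)=\Phi(x,y,z)-\Phi(y,x,z)$ and $\hatN(x,y,z)=\Phi(x,y,z)+\Phi(y,x,z)$; that is, $\hatN$ is the $x\leftrightarrow y$ symmetrization of the very building block that $N$ antisymmetrizes. Hence for each class it suffices to evaluate $\Phi_i$ once: its antisymmetric part reproduces the corresponding row of \tablref{tab:NijenhuisTensors}, while its symmetric part yields the $\hatN^{(1)}$ entry sought. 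In particular, when $\Phi_i$ is purely antisymmetric in $x,y$ then $\hatN^{(1)}$ vanishes, and when it is purely symmetric then $N^{(1)}$ vanishes; this accounts for the swap of roles between $\F_6$ and $\F_7$, since $F_7$ is built from the $\xi$-antisymmetric combinations and $F_6$ from the $\xi$-symmetric ones. It also disposes at once of $\F_3$ (all four tensors zero) and, via (\ref{hatNkF}), of $\hatN^{(2)}$, $\hatN^{(3)}$, $\hatN^{(4)}$ in every class where $F_i$ carries no $\xi$ in its last argument.

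I would then run through the remaining classes in groups. For $\F_1$ and $\F_2$ only $\hatN^{(1)}$ survives; inserting $F_1$ into (\ref{HN}) and using $\f\xi=0$ to annihilate the $\eta(z)$-terms, the four contributions combine, the mixed $\ta(\f^2 y)$- and $\ta(\f^2 x)$-terms cancelling in pairs through $g(\f x,\f^2 z)=g(x,\f z)$, and leaving the stated $\frac{2}{n}\bigl\{g(x,\f y)\ta(\f^2 z)-g(\f x,\f y)\ta(\f z)\bigr\}$. The pair $(\F_4,\F_5)$ is treated together, their $\eta(z)$-factored forms feeding directly into (\ref{HN}) and the first line of (\ref{hatNkF}) to give the listed $\hatN^{(1)}$ and $\hatN^{(2)}$. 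For $\F_8$, $\F_9$, $\F_{10}$ the same $\Phi_i$-computation delivers both the $\hatN^{(1)}$ entry and, from the second line of (\ref{hatNkF}), the nonzero $\hatN^{(3)}$.

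The main obstacle is the bookkeeping in $\F_6$ and, above all, in $\F_{11}$. In $\F_6$ the eight-term defining expression must be pushed through (\ref{HN}) and (\ref{hatNkF}), and I expect the $\ta(\xi)$- and $\ta^*(\xi)$-corrections to cancel, leaving $4F(x,\f y,\xi)\eta(z)$ and $-4F(x,y,\xi)$, in agreement with the $\F_6\leftrightarrow\F_7$ swap. The subtlest row is $\F_{11}$: substituting $F_{11}(x,y,z)=\eta(x)\bigl\{\eta(y)\om(z)+\eta(z)\om(y)\bigr\}$ into (\ref{HN}) and using $\eta\circ\f=0$, $\eta(\xi)=1$, $\om(\xi)=0$, the terms $F(\f x,y,z)$ and $F(\f y,x,z)$ drop out, the piece $\eta(z)\bigl\{F(x,\f y,\xi)+F(y,\f x,\xi)\bigr\}$ produces $\eta(z)\bigl\{\eta(x)\om(\f y)+\eta(y)\om(\f x)\bigr\}$, and the piece $-F(x,y,\f z)-F(y,x,\f z)$ produces the extra term $-2\eta(x)\eta(y)\om(\f z)$, which must be carried rather than discarded. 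The companion entries $\hatN^{(2)}$, $\hatN^{(3)}$, $\hatN^{(4)}$ for $\F_{11}$ follow in the same way from (\ref{hatNkF}), and with this the remaining rows of \tablref{tab:AssociatedNijenhuisTensors} are complete.
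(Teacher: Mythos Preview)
Your approach is correct and matches the paper's own proof, which simply states that the entries follow by direct substitution of each $F_i$ from (\ref{F1-11}) into (\ref{HN}) and (\ref{hatNkF}). Your $\Phi$-decomposition, writing $N$ and $\hatN$ as the antisymmetric and symmetric parts of the same building block, is a tidy organizational device that the paper does not make explicit but that genuinely economizes the bookkeeping and explains the $\F_6\leftrightarrow\F_7$ and $\F_3$ phenomena at a glance.
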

\begin{proof}
The calculations are made, using (\ref{HN}), (\ref{hatNkF}) and the expression
(\ref{F1-11}) of each of $F_i$ for the corresponding class $\F_i$.
\end{proof}

As a result of \propref{prop-N3}, we establish the truthfulness of the following
\begin{thm}\label{thm-HN}
An almost paracontact almost paracomplex Riemannian manifold $\M$ has:
\begin{enumerate}
  \item[a)]
 vanishing $\hatN^{(1)}$ if and only if it belongs to some of the basic classes
 $\F_3$, $\F_7$ or to their direct sum;
  \item[b)]
 vanishing $\hatN^{(2)}$ if and only if it belongs to some of the basic classes
 $\F_1$,  $\F_2$,  $\F_3$, $\F_7$, $\dots$, $\F_{10}$ or to their direct sums;
  \item[c)]
  vanishing $\hatN^{(3)}$ if and only if it belongs to some of the basic classes
  $\F_1,\dots, \F_7$ or to their direct sums;
    \item[d)]
  vanishing $\hatN^{(4)}$ if and only if it belongs to some of the basic classes
  $\F_1,\dots, \F_{10}$ or to their direct sums.
\end{enumerate}
\end{thm}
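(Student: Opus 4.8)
The plan is to deduce \thmref{thm-HN} directly from the explicit table of associated Nijenhuis tensors in \propref{prop-N3}, in exact parallel to how \thmref{thm-F123} was obtained from \propref{prop-p3}. The key observation is that on a general manifold the fundamental tensor decomposes as $F=F_1+\cdots+F_{11}$, and since each $\hatN^{(k)}$ is a linear function of $F$ (by the formulas \eqref{HN} and \eqref{hatNkF}), the value of $\hatN^{(k)}$ on a direct sum of classes is the sum of its values on the individual components listed in \tablref{tab:AssociatedNijenhuisTensors}. Moreover, the components $\hatN^{(k)}_i$ coming from distinct classes $\F_i$ live in mutually orthogonal, structurally distinct tensor subspaces (they are distinguished by their $\eta$-dependence and by the placement of $\f$-arguments), so a sum of them vanishes if and only if each summand whose class actually occurs vanishes. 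Thus the problem reduces to reading off from the table exactly which basic classes $\F_i$ make a given $\hatN^{(k)}$ identically zero.

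First I would handle part a). Scanning the first column of \tablref{tab:AssociatedNijenhuisTensors}, the entry $\hatN^{(1)}$ vanishes precisely for $\F_3$ and $\F_7$; for every other class the listed expression is a nonzero tensor (for $\F_1,\F_2$ it is a nontrivial tensor on $\HH$, for $\F_4,\F_5,\F_6$ it carries a factor $\eta(z)$ with nonvanishing $\HH$-part, for $\F_8,\F_9$ it is supported on the $\eta(x),\eta(y)$ directions, and for $\F_{10},\F_{11}$ it is likewise nonzero). Because a sum over several occurring classes vanishes iff each occurring component vanishes, $\hatN^{(1)}=0$ holds iff the only classes present are $\F_3$ and $\F_7$, i.e. iff $\M\in\F_3$, $\F_7$, or $\F_3\oplus\F_7$.

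Parts b), c), d) proceed identically by reading the remaining three columns. For $\hatN^{(2)}$ the nonzero entries occur only in $\F_4,\F_5,\F_6,\F_{11}$, so $\hatN^{(2)}=0$ characterizes the complementary set $\F_1,\F_2,\F_3,\F_7,\ldots,\F_{10}$ and their direct sums. For $\hatN^{(3)}$ the nonzero entries occur in $\F_8,\F_9,\F_{10},\F_{11}$, leaving $\F_1,\dots,\F_7$; and for $\hatN^{(4)}$ the only nonzero entry is in $\F_{11}$, leaving $\F_1,\dots,\F_{10}$. In each case the ``if'' direction is immediate (all listed components vanish) and the ``only if'' direction uses the linear-independence of the distinct class-components to force the absence of any class with a nonvanishing entry.

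The main obstacle, and the only nonroutine point, is justifying the ``only if'' directions rigorously: one must verify that the nonzero table entries for the relevant classes are genuinely nonzero tensors and, more delicately, that they cannot cancel against entries from other classes in a mixed direct sum. This is where the orthogonality and invariance of the decomposition $\mathbb F=\bigoplus_i\mathbb F_i$ under the structure group $\mathcal O(n)\times\mathcal O(n)\times 1$ is essential: since each $\hatN^{(k)}$ is a structure-group-equivariant linear map, it sends the orthogonal summands $\mathbb F_i$ into orthogonal (hence independent) targets, so no cross-cancellation is possible and the vanishing of $\hatN^{(k)}$ on $\M$ is equivalent to the vanishing of every occurring component separately. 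Once this independence is recorded, all four statements follow by inspection of \tablref{tab:AssociatedNijenhuisTensors}, exactly as stated in the theorem.
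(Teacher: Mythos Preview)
Your approach is essentially the same as the paper's: the paper states the theorem as an immediate consequence of \propref{prop-N3} and its \tablref{tab:AssociatedNijenhuisTensors}, with no further argument, and you do exactly this table-reading. You supply more detail than the paper does on the ``only if'' direction; one small caution is that structure-group equivariance alone does not literally force the images of the $\mathbb F_i$ under $\hatN^{(k)}$ to be mutually orthogonal, but your earlier observation---that the nonzero entries in each column are distinguished by their $\eta$-placement and $\f$-type---already suffices to rule out cross-cancellation, so the conclusion stands.
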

By virtue of \thmref{thm-HN}, we obtain the following
\begin{cor}
The class of almost paracontact almost paracomplex Riemannian manifolds with a
vanishing associated Nijenhuis tensor $\hatN$ is $\F_3\oplus\F_7$.
\end{cor}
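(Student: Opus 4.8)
The plan is to read the statement off directly from \thmref{thm-HN}. By the defining equality (\ref{HatNdef}), the associated Nijenhuis tensor $\hatN$ is by definition the tensor $\hatN^{(1)}$, so the class we must identify is exactly the class of those $\M$ on which $\hatN^{(1)}$ vanishes identically. But this is precisely what \thmref{thm-HN}~a) characterizes: $\hatN^{(1)}=0$ if and only if $\M$ belongs to $\F_3$, to $\F_7$, or to their direct sum. Hence essentially all of the work has already been carried out, first in the entrywise computation of \tablref{tab:AssociatedNijenhuisTensors} (\propref{prop-N3}) and then in its assembly into \thmref{thm-HN}.

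The only step remaining is to recognize that the three alternatives ``$\F_3$'', ``$\F_7$'' and ``$\F_3\oplus\F_7$'' collapse into the single class $\F_3\oplus\F_7$. A manifold in $\F_3$ (respectively in $\F_7$) is just the special case of a manifold in $\F_3\oplus\F_7$ whose $\F_7$-component (respectively $\F_3$-component) of $F$ vanishes, so membership in $\F_3$ or in $\F_7$ is already subsumed under membership in $\F_3\oplus\F_7$. Conversely, every $\M$ in $\F_3\oplus\F_7$ satisfies $F=F_3+F_7$, and since the table gives $\hatN^{(1)}=0$ on both $\F_3$ and $\F_7$, the linearity of $\hatN$ in $F$ through (\ref{HN}) immediately yields $\hatN=0$ on all of $\F_3\oplus\F_7$. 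Combining the two inclusions establishes the corollary.

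Because the substantive calculations are already in place, I expect no genuine obstacle here; the only point requiring a little care is the bookkeeping of the direct-sum convention. This rests on the orthogonality and structure-invariance of the subspaces $\mathbb{F}_i$, which guarantee that the $\hatN$ of a general $F=F_1+\dots+F_{11}$ is the sum of the component contributions listed in \tablref{tab:AssociatedNijenhuisTensors}, so that $\hatN$ vanishes precisely when every component $F_i$ with $i\neq 3,7$ is zero. This is exactly the reasoning already invoked in the proof of \thmref{thm-HN}, and I would simply cite that theorem to conclude.
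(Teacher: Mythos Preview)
Your proposal is correct and follows the same approach as the paper: the corollary is stated immediately after \thmref{thm-HN} with the words ``By virtue of \thmref{thm-HN}, we obtain the following'', so the paper's proof is nothing more than a citation of part~a) of that theorem, together with the identification $\hatN=\hatN^{(1)}$ from (\ref{HatNdef}). Your additional remarks on the direct-sum bookkeeping and the linearity of $\hatN$ in $F$ via (\ref{HN}) are accurate and simply make explicit what the paper leaves implicit.
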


\section{The pair of Nijenhuis tensors and the classification of the considered
manifolds}\label{sec-FNhaN}

In the previous two sections, by (\ref{N}) and (\ref{HN}), we give the expressions of
the Nijenhuis tensor $N$ and its associated $\hatN$ by the tensor $F$, respectively.
Here, we find how the fundamental tensor $F$ is determined by the pair of Nijenhuis
tensors. Since $F$ is used for classifying the manifolds studied, we can expressed the
classes $\F_i$ $i=(1,2,\dots,11)$ only by the pair $(N,\hatN)$.

\begin{thm}\label{gen}
Let $\M$ be an almost paracontact almost paracomplex Rie\-mannian manifold. Then its
fundamental tensor is expressed by $N$ and $\hatN$ by the
formula:
\begin{equation}\label{nabf}
\begin{array}{l}
F(x,y,z)
=\frac14\bigl[N(\f x,y,z)+N(\f x,z,y)
+\hatN(\f x,y,z)+\hatN(\f x,z,y)\bigr]\\[0pt]
\phantom{F(x,y,z)=}
-\frac12\eta(x)\bigl[N(\xi,y,\f z)+\hatN(\xi,y,\f z)+\eta(z)\hatN(\xi,\xi,\f
y)\bigr].
\end{array}
\end{equation}
\end{thm}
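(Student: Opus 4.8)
The plan is to reconstruct $F$ from the \emph{symmetric} combination $N+\hatN$. Comparing (\ref{N}) and (\ref{HN}), the antisymmetric pieces $-F(\f y,x,z)$, $+F(y,x,\f z)$, $-\eta(z)F(y,\f x,\xi)$ in $N$ are exactly cancelled by their counterparts in $\hatN$, so adding the two expressions leaves only a ``diagonal'' part. Concretely, I would set $G(x,y,z)=\tfrac12\bigl(N+\hatN\bigr)(x,y,z)$ and read off from (\ref{N}) and (\ref{HN}) the master identity
\[
G(x,y,z)=F(\f x,y,z)-F(x,y,\f z)+\eta(z)F(x,\f y,\xi),
\]
which is the engine of the whole argument. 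Note $\tfrac12 G(\f x,y,z)=\tfrac14\bigl[N(\f x,y,z)+\hatN(\f x,y,z)\bigr]$, so everything produced by $G$ will automatically be phrased in the desired $N,\hatN$ form.

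Next I would feed $\f x$ into $G$ and symmetrize in the last two slots. Using $\f^2x=x-\eta(x)\xi$ from (\ref{str}) gives $F(\f^2x,y,z)=F(x,y,z)-\eta(x)F(\xi,y,z)$, and invoking the last-two-slot symmetry $F(x,y,z)=F(x,z,y)$ of (\ref{F-prop}), the sum $G(\f x,y,z)+G(\f x,z,y)$ equals $2F(x,y,z)-2\eta(x)F(\xi,y,z)$ plus the leftover block $-[F(\f x,y,\f z)+F(\f x,z,\f y)]+\eta(z)F(\f x,\f y,\xi)+\eta(y)F(\f x,\f z,\xi)$. The crux is to annihilate this block, for which I would establish the key lemma
\[
F(\f x,y,\f z)+F(\f x,z,\f y)=\eta(z)F(\f x,\f y,\xi)+\eta(y)F(\f x,\f z,\xi).
\]
I would prove it by applying the second line of (\ref{F-prop}), in the form $F(a,b,c)=-F(a,\f b,\f c)+\eta(b)F(a,\xi,c)+\eta(c)F(a,b,\xi)$, to each summand on the left, using $\eta\circ\f=0$ and $\f^2=I-\eta\otimes\xi$. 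This rewrites the left side in terms of $F(\f x,\f y,z)+F(\f x,\f z,y)$, which by the last-two-slot symmetry is the left side over again; the resulting self-referential equation $A=-A+(\text{boundary terms})$ then solves for $A$. With the lemma the leftover block cancels exactly, giving the first line of (\ref{nabf}) together with an unresolved term $+\eta(x)F(\xi,y,z)$.

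It remains to express this residual $F(\xi,y,z)$ through the Nijenhuis pair, which is the content of the second line of (\ref{nabf}). Here I would evaluate the master identity at $(\xi,y,\f z)$: since $\f\xi=0$ and $\eta\circ\f=0$, it collapses to $G(\xi,y,\f z)=-F(\xi,y,z)+\eta(z)F(\xi,y,\xi)$, hence $F(\xi,y,z)=-G(\xi,y,\f z)+\eta(z)F(\xi,y,\xi)$. To handle the last term I would compute $\hatN(\xi,\xi,\f y)$ from (\ref{HN}): the $\f\xi$-terms drop and one gets $\hatN(\xi,\xi,\f y)=-2F(\xi,\xi,\f^2y)=-2F(\xi,y,\xi)$, where $F(\xi,\xi,\xi)=0$ (itself immediate from (\ref{F-prop})) lets me discard the $\eta(y)\xi$ part of $\f^2y$. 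Substituting these back and replacing $G$ by $\tfrac12(N+\hatN)$ converts both residual pieces into $-\tfrac12\eta(x)\bigl[N(\xi,y,\f z)+\hatN(\xi,y,\f z)+\eta(z)\hatN(\xi,\xi,\f y)\bigr]$, completing (\ref{nabf}).

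I expect the main obstacle to be precisely the key lemma on $F(\f x,y,\f z)+F(\f x,z,\f y)$: the rest is disciplined bookkeeping with the structure relations and the $\eta$-boundary terms, but that step genuinely requires noticing that the last-two-slot symmetry of $F$ turns the application of (\ref{F-prop}) into a closed equation for the very quantity one is trying to simplify, rather than into an ever-growing cascade of terms.
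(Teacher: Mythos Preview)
Your proposal is correct and follows essentially the same route as the paper: your ``master identity'' is exactly the paper's equation (\ref{sum1}), your ``key lemma'' is the paper's identity (\ref{sum2}) (which holds for any first argument, not just $\f x$, and follows from a single application of (\ref{F-prop}) together with the last-two-slot symmetry---your self-referential argument is a mild detour to the same place), and your treatment of the residual $\eta(x)F(\xi,y,z)$ term mirrors the paper's (\ref{nabff2}) and (\ref{fff}). The only difference is ordering: the paper first combines (\ref{sum1}) and (\ref{sum2}) to obtain $F(\f x,y,z)$ in closed form and then substitutes $x\to\f x$, whereas you substitute first and symmetrize, but the computations are the same.
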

\begin{proof}
Taking the sum of (\ref{N}) and (\ref{HN}), we obtain:
\begin{equation}\label{sum1}
F(\f x,y,z)-F(x,y,\f z)=\frac12\bigl[N(x,y,z)
+\hatN(x,y,z)\bigr]-\eta(z)F(x,\f y,\xi).
\end{equation}
The  identities  (\ref{F-prop}) together with (\ref{str}) imply:
\begin{equation}\label{sum2}
F(x,y,\f z)+F(x,z,\f y)=\eta(z)F(x,\f y,\xi)+\eta(y)F(x,\f
z,\xi).
\end{equation}
A suitable combination of (\ref{sum1}) and (\ref{sum2}) yields:
\begin{equation}\label{nabff}
F(\f x,y,z)=\frac14\bigl[N(x,y,z)+N(x,z,y)+
\hatN(x,y,z)+\widehat N(x,z,y)\bigr]. %\\[0pt]
%\phantom{F(\f x,y,z)=}
%+F(x,\f y,\xi)\eta(z)+F(x,\f z,\xi)\eta(y).
\end{equation}
Applying (\ref{str}), we obtain from (\ref{nabff}) the following:
\begin{equation}\label{nabff1}
\begin{array}{ll}
F(x,y,z)=\frac14\bigl[N(\f x,y,z)+N(\f x,z,y)+\hatN(\f x,y,z)+\hatN(\f
x,z,y)\bigr]\\[0pt]
\phantom{F(x,y,z)=}
+\eta(x)F(\xi,y,z).
\end{array}
\end{equation}
Set $x=\xi$ and $z\rightarrow \f z$ into (\ref{sum1}) and use
(\ref{str}) to get:
\begin{equation}\label{nabff2}
F(\xi,y,z)=-\frac12\bigl[N(\xi,y,\f z)+\hatN(\xi,y,\f
z)\bigr]+\eta(z)\om(y).
\end{equation}
Finally, using (\ref{HN}) and the general
identities $\om(\xi)=0$, we obtain:
\begin{equation}\label{fff}
\om(z)=-\frac12\hatN(\xi,\xi,\f z).
\end{equation}
Substitute (\ref{fff} )into (\ref{nabff2}) and the obtained
identity insert into (\ref{nabff1}) to get (\ref{nabf}).
\end{proof}

\begin{cor}\label{cor:FN}
The class of almost paracontact almost paracomplex Rie\-mannian manifolds with
vanishing tensors $N$ and $\hatN$ is the special class $\F_0$.
\end{cor}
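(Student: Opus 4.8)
The plan is to deduce this directly from \thmref{gen}, which already does all the heavy lifting. By definition, $\F_0$ is precisely the class determined by the condition $F(x,y,z)=0$, so the corollary is equivalent to the assertion that $F$ vanishes identically if and only if both $N$ and $\hatN$ vanish identically. Thus the whole task reduces to proving this equivalence, and both directions are immediate from formulas established earlier.

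First I would handle the easy implication: if $\M\in\F_0$, then $F\equiv 0$, and by the explicit expressions (\ref{N}) of $N$ and (\ref{HN}) of $\hatN$ in terms of $F$, each term on the right-hand sides vanishes, so $N=0$ and $\hatN=0$ follow at once. This direction requires no computation beyond substituting $F=0$.

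For the converse I would invoke formula (\ref{nabf}) of \thmref{gen}, which expresses the fundamental tensor $F$ entirely through $N$ and $\hatN$. Setting $N=0$ and $\hatN=0$ in (\ref{nabf}) makes every term on the right-hand side vanish, so $F\equiv 0$, i.e. $\M\in\F_0$. Combining the two implications yields the stated characterization. I do not expect any genuine obstacle here: the content of the corollary is entirely carried by \thmref{gen}, and the argument is a one-line substitution in each direction. The only point worth a remark is that the equivalence relies on (\ref{nabf}) being valid for \emph{arbitrary} almost paracontact almost paracomplex Riemannian manifolds, which is exactly the hypothesis under which \thmref{gen} was proved, so no additional assumptions are needed.
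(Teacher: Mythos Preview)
Your proposal is correct and matches the paper's approach exactly: the corollary is stated immediately after \thmref{gen} with no separate proof, so the intended argument is precisely the substitution you describe, with (\ref{nabf}) giving $F=0$ from $N=\hatN=0$ and (\ref{N}), (\ref{HN}) giving the converse. Your write-up is, if anything, more explicit than the paper in spelling out both implications.
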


\section{A family of Lie groups as manifolds of the studied type}\label{sec-exm}

Let $\LLL$ be a $(2n+1)$-dimensional real connected Lie group and let its
associated Lie algebra with a global basis $\{E_{0},E_{1},\dots,
E_{2n}\}$ of left invariant vector fields on $\LLL$ be defined by:
\begin{equation}\label{com}
    [E_0,E_i]=-a_iE_i-a_{n+i}E_{n+i},\qquad
    [E_0,E_{n+i}]=-a_{n+i}E_i+a_{i}E_{n+i},
\end{equation}
where $a_1,\dots,a_{2n}$ are real constants and $[E_j,E_k]=0$ in
other cases.

Let $(\f,\xi,\eta)$ be an almost paracontact almost paracomplex structure determined
for any
${i\in\{1,\dots,n\}}$ by:
\begin{equation}\label{strL}
\begin{array}{lll}
\f E_0=0,\qquad & \f E_i=E_{n+i},\qquad & \f E_{n+i}=E_i, \\[0pt]
\xi=E_0, \qquad  & \eta(E_0)=1, \qquad & \eta(E_i)=\eta(E_{n+i})=0.
\end{array}
\end{equation}
Let $g$ be a Riemannian metric defined by:
\begin{equation}\label{gL}
\begin{array}{l}
  g(E_0,E_0)=g(E_i,E_i)=g(E_{n+i},E_{n+i})=1, \\
  g(E_0,E_j)=g(E_j,E_k)=0,
\end{array}
\end{equation}
where $i\in\{1,\dots,n\}$ and $j, k \in\{1,\dots,2n\}$, $j\neq k$.
Thus, since (\ref{str}) is satisfied, the induced $(2n+1)$-dimensional
manifold $(\LLL,\f, \xi, \eta, g)$ is an almost paracontact almost paracomplex
Riemannian manifold.

Let us remark that in \cite{Ol} the same Lie group is considered with an appropriate
almost contact structure and a compatible Riemannian
metric. Then, the generated almost cosymplectic manifold is studied.
On the other hand,
in \cite{HM}, the same Lie group is equipped with an almost contact structure and
B-metric. Then, the obtained manifold is characterized. Moreover, in \cite{HManMek},
the case of the lowest dimension is considered and properties of the constructed
manifold are determined.

Let us consider the constructed almost paracontact almost paracomplex Riemannian
manifold
$(\LLL,\f, \xi, \allowbreak{}\eta, g)$ of dimension 3, i.e. for $n=1$.

According to (\ref{com}) and (\ref{gL}) for $n=1$,  by the
Koszul equality
\[
2g\left(\n_{E_i}E_j,E_k\right)
=g\left([E_i,E_j],E_k\right)+g\left([E_k,E_i],E_j\right)
+g\left([E_k,E_j],E_i\right)
\]
for the Levi-Civita connection $\n$ of $g$, we obtain:
\begin{equation}\label{nEi}
\begin{array}{ll}
    \n_{E_1}E_0=a_1E_1+a_2E_2,\qquad & \n_{E_2}E_0=a_2E_1-a_1E_2,\\[0pt]
    \n_{E_1}E_1=-\n_{E_2}E_2=-a_1E_0,\qquad & \n_{E_1}E_2=\n_{E_2}E_1=-a_2E_0,
\end{array}
\end{equation}
and the others $\n_{E_i}E_j$ are zero.

Then, using  (\ref{nEi}), (\ref{strL}), (\ref{F=nfi}) and (\ref{Fi3}),
we get the following  components $F_{ijk}=F(E_i,E_j,E_k)$
of the fundamental tensor:
\[
F_{101}=F_{110}=F_{202}=F_{220}=-a_2,\qquad
F_{102}=F_{120}=-F_{201}=-F_{210}=-a_1,
\]
and the other components of $F$  are zero.
Thus, we have  the expression of $F$ for arbitrary vectors
$x=x^iE_i$, $y=y^iE_i$, $z=z^iE_i$ as follows:
\begin{equation}\label{Fexa}
\begin{array}{ll}
 F(x,y,z)={-}a_2\left\{x^1\left(y^0z^1+y^1z^0\right)+x^2\left(y^0z^2+y^2z^0\right)\right\}\\[0pt]
\phantom{ F(x,y,z)=\,\,}
{-}a_1\left\{x^1\left(y^0z^2+y^2z^0\right)-x^2\left(y^0z^1+y^1z^0\right)\right\}.
\end{array}
\end{equation}
Bearing in mind the latter equality, we obtain that $F$ has  the following form:
\[
F(x,y,z)=F_4(x,y,z)+F_9(x,y,z),
\]
by virtue of (\ref{Fi3}) for $\mu=-a_1$, $\ta_0=-2a_2$. Therefore, we have proved the
following

\begin{prop}\label{prop-exa1}
The constructed 3-dimen\-sional almost paracontact almost paracomplex Riemannian
manifold $(\LLL,\f,\xi,\eta,g)$ belongs to:
\begin{enumerate}
  \item[a)]
 $\F_4\oplus\F_9$ if and only if $a_1\neq 0$, $a_2\neq 0$;
  \item[b)]
 $\F_4$ if and only if $a_1= 0$, $a_2\neq 0$;
  \item[c)]
  $\F_9$ if and only if $a_1\neq 0$, $a_2= 0$;
    \item[d)]
  $\F_0$ if and only if $a_1= 0$, $a_2= 0$.
\end{enumerate}
\end{prop}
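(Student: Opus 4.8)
The plan is to read off the classification directly from the explicit expression of $F$ in equation~(\ref{Fexa}) and compare it term by term with the components $F_i$ catalogued for dimension~3 in \propref{prop-Fi}. The key observation is that the computation already done in the excerpt shows $F=F_4+F_9$, where, inspecting (\ref{Fi3}), the $F_4$-part is governed by the coefficient $\ta_0=-2a_2$ and the $F_9$-part by $\mu=-a_1$. Thus the entire statement reduces to tracking when each of these two scalars vanishes, and my proof would be essentially a four-line case analysis.

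First I would recall from the paragraph preceding the proposition that, using the Levi-Civita connection values (\ref{nEi}) together with the structure (\ref{strL}) and the defining formula (\ref{F=nfi}), one arrives at (\ref{Fexa}), and hence at $F=F_4+F_9$ with $\ta_0=-2a_2$ and $\mu=-a_1$. The crucial point for the ``if and only if'' direction is that $F_4$ and $F_9$ live in \emph{distinct} basic subspaces $\mathbb{F}_4$ and $\mathbb{F}_9$, which by the orthogonal decomposition of $\mathbb{F}$ intersect only in $\F_0$. Consequently the component $F_4$ vanishes precisely when its coefficient $\ta_0=-2a_2$ is zero, i.e. when $a_2=0$, and $F_9$ vanishes precisely when $\mu=-a_1=0$, i.e. when $a_1=0$; there can be no cancellation between the two because they are orthogonal.

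With that in hand the four cases are immediate. If $a_1\neq 0$ and $a_2\neq 0$, both components are nonzero and independent, so $F\in\mathbb{F}_4\oplus\mathbb{F}_9$ and the manifold is in $\F_4\oplus\F_9$, giving~a). If $a_1=0$, $a_2\neq 0$, then $\mu=0$ kills $F_9$ while $F_4\neq 0$, so the manifold lies in $\F_4$, giving~b); symmetrically, $a_1\neq 0$, $a_2=0$ forces $F_4=0$ and $F_9\neq 0$, yielding $\F_9$ and~c). Finally, if $a_1=a_2=0$ then $F\equiv 0$, which by definition of the special class means $(\LLL,\f,\xi,\eta,g)\in\F_0$, establishing~d).

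I do not anticipate a genuine obstacle here: the heavy lifting has already been carried out in deriving (\ref{Fexa}), and the argument is purely a matter of invoking the orthogonality of the subspaces $\mathbb{F}_i$ (so that no hidden cancellations occur) and then reading off the four sign patterns. The only point requiring a word of care is the necessity direction of each biconditional, where one must explicitly use that $F_4$ and $F_9$ cannot cancel one another; this is exactly where the orthogonal-decomposition property of the classification is needed, and it is the single nontrivial ingredient in an otherwise routine case check.
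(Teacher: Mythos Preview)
Your proposal is correct and matches the paper's approach: the paper derives (\ref{Fexa}), identifies $F=F_4+F_9$ with $\ta_0=-2a_2$ and $\mu=-a_1$ via (\ref{Fi3}), and then simply states the proposition as a direct consequence. Your explicit invocation of the orthogonality of $\mathbb{F}_4$ and $\mathbb{F}_9$ to justify the necessity direction of each biconditional is a welcome clarification the paper leaves implicit, but the argument is otherwise identical.
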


Finally, we get the following
\begin{prop}\label{prop-exa1a}
The constructed 3-dimensional almost paracontact almost paracomplex Riemannian
manifold $(\LLL,\f,\allowbreak{}\xi,\eta,g)$ has the following properties:
\begin{enumerate}
  \item[a)]	
	It has vanishing $N^{(4)}$ and $\hatN^{(4)}$;
  \item[b)]
 It is a normal almost paracontact almost paracomplex Riemannian manifold with
 vanishing $\hatN^{(3)}$ if and only if $a_1= 0$ and arbitrary $a_2$;
%
%with vanishing ????? and$\hatN^{(2)}$ and.......????  $\hatN^{(4)}$ if and only if
$a_1= 0$ and arbitrary $a_2$;
	
% ???? It has vanishing $N^{(2)}$, $N^{(4)}$ and $\hatN^{(4)}$ if $a_1\neq 0$,
$a_2\neq 0$;
  \item[c)]
 It is a para-Sasakian paracomplex Riemannian manifold if and only if $a_1= 0$,
 $a_2=1$;
  \item[d)]
 It has vanishing $\hatN^{(2)}$ if and only if $a_2= 0$ and arbitrary $a_1$.
\end{enumerate}
\end{prop}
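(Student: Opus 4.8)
The plan is to read off every assertion directly from the class membership established in \propref{prop-exa1}, combined with the two tables of Nijenhuis and associated Nijenhuis tensors. The crucial observation I would use is that all four tensors $N^{(k)}$ and $\hatN^{(k)}$ are \emph{linear} in the fundamental tensor $F$, by their expressions (\ref{N}), (\ref{HN}), (\ref{NkF}) and (\ref{hatNkF}). Since on the constructed manifold $F=F_4+F_9$ with $\mu=-a_1$ and $\ta_0=-2a_2$, each $N^{(k)}$ (resp.\ $\hatN^{(k)}$) equals the sum of its value on an $\F_4$-manifold and on an $\F_9$-manifold, as listed in the corresponding rows of \tablref{tab:NijenhuisTensors} (resp.\ \tablref{tab:AssociatedNijenhuisTensors}). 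For part a) this settles the matter at once: both the $\F_4$-row and the $\F_9$-row of the two tables carry $N^{(4)}=0$ and $\hatN^{(4)}=0$, so their sum vanishes for every choice of $a_1$, $a_2$.

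For part b), I would note that normality means $N=N^{(1)}=0$, and that by the corollary following \thmref{thm-F123} the normal class is $\F_1\oplus\F_2\oplus\F_4\oplus\F_5\oplus\F_6$. The $\F_4$-component of our manifold lies inside this class, whereas the $\F_9$-component does not: the $\F_9$-row gives $N^{(1)}(x,y,z)=2\{\eta(x)F(y,\f z,\xi)-\eta(y)F(x,\f z,\xi)\}$, which is nonzero precisely when $a_1\neq0$. Hence normality is equivalent to $a_1=0$, with $a_2$ arbitrary. The very same condition governs $\hatN^{(3)}$: the $\F_4$-row gives $\hatN^{(3)}=0$ while the $\F_9$-row gives $\hatN^{(3)}(x,y)=2F(x,y,\xi)$, so $\hatN^{(3)}$ vanishes exactly when $a_1=0$. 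Thus both requirements collapse to the single condition $a_1=0$.

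For part c) I would invoke \thmref{thm:pS}, which identifies the para-Sasakian paracomplex Riemannian manifolds with the subclass ${\F_4}'$. Membership in ${\F_4}'$ forces both the absence of an $\F_9$-component, i.e.\ $a_1=0$, and the normalization $\ta(\xi)=-2n$ from (\ref{F4'}); since $n=1$ and $\ta(\xi)=\ta_0=-2a_2$, the equation $-2a_2=-2$ yields $a_2=1$. For part d), only the $\F_4$-row contributes to $\hatN^{(2)}$, the $\F_9$-entry being zero; evaluating the $\F_4$-entry at $n=1$ and $\ta(\xi)=-2a_2$ gives $\hatN^{(2)}(x,y)=-2\ta(\xi)g(\f x,\f y)=4a_2\,g(\f x,\f y)$, which vanishes identically if and only if $a_2=0$. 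I expect no genuine obstacle here; the only care needed is to check both directions of each ``if and only if'', that is, to confirm that the offending component really is nonzero when the relevant constant is nonzero—which is immediate from the explicit, nontrivial form (\ref{Fexa}) of $F$.
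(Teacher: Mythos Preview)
Your proof is correct, but it follows a genuinely different route from the paper's own argument. The paper proceeds by brute force: starting from the explicit expression \eqref{Fexa} of $F$, it plugs into \eqref{N}, \eqref{NkF}, \eqref{HN} and \eqref{hatNkF} to compute $N$, $\hatN$ and all $N^{(k)}$, $\hatN^{(k)}$ as explicit polynomials in the coordinates $x^i,y^i,z^i$ with coefficients $a_1,a_2$, and then reads off each assertion from these closed-form expressions. You instead exploit the linearity of all eight tensors in $F$, combine it with the decomposition $F=F_4+F_9$ from \propref{prop-exa1}, and simply add the $\F_4$- and $\F_9$-rows of Tables~\ref{tab:NijenhuisTensors} and \ref{tab:AssociatedNijenhuisTensors}. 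Your approach is more structural and avoids any coordinate computation, relying only on results already established in the paper; the paper's direct computation, by contrast, produces explicit coordinate formulas that double as an independent sanity check of the tables in this concrete example. Both arguments are complete and the ``only if'' directions are handled correctly in each case.
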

\begin{proof}
According to (\ref{Fexa}), (\ref{Fi3}) and \propref{prop-p3}, we find the following
form of the Nijenhuis tensor of $(\LLL,\f,\allowbreak{}\xi,\eta,g)$:
\[
N(x,y,z)=-2a_1\left\{\left(x^1y^2-x^2y^1\right)z^0+\left(x^0y^1-x^1y^0\right)z^1-\left(x^0y^2-x^2y^0\right)z^2\right\}.
\]
From the latter equality and (\ref{NkN}) (or alternatively from (\ref{Fexa}) and
(\ref{NkF})), we have:
\[
N^{(2)}(x,y)=2a_1(x^1y^1-x^2y^2),\quad
N^{(3)}(x,y)=2a_1(x^1y^2-x^2y^1),\quad
N^{(4)}(x)=0.
\]

Similarly, for the associated Nijenhuis tensor of $(\LLL,\f,\allowbreak{}\xi,\eta,g)$
we obtain:
\[
\hatN(x,y,z)=-4a_2\left(x^1y^2+x^2y^1\right)z^0
+2a_1\left\{\left(x^0y^1+x^1y^0\right)z^1-\left(x^0y^2+x^2y^0\right)z^2\right\}.
\]
By virtue of (\ref{hatNkN}) and the equality from above (or in other way by
(\ref{Fexa}) and (\ref{hatNkF})), we get:
\[
\hatN^{(2)}(x,y)=4a_2\left(x^1y^1+x^2y^2\right), \quad
\hatN^{(3)}(x,y)=-2a_1\left(x^1y^2-x^2y^1\right),\quad
\hatN^{(4)}(x)=0.
\]

As a conclusion, the obtained results imply the propositions in a), b) and d).
Moreover, the case of the ${\F_4}'$-manifold, i.e. the proposition in c), follows from
\propref{prop-exa1} b).
\end{proof}

\bigskip

 \bigskip

{\small\rm\baselineskip=10pt
 \baselineskip=10pt
 \qquad Mancho H. Manev \par
 \qquad Faculty of Mathematics and
Informatics, University of Plovdiv Paisii Hilendarski \par
 \qquad Department of Algebra and Geometry \par
 \qquad 24 Tzar Asen St, 4000 Plovdiv,
Bulgaria \par
 \qquad and \par
\qquad  Faculty of Public
Health, Medical University of Plovdiv \par
 \qquad Department of Medical Informatics, Biostatistics and E-Learning \par
 \qquad 15A 	Vasil Aprilov Blvd, 4002 Plovdiv,
Bulgaria \par
 \qquad {\tt mmanev@uni-plovdiv.bg}

 \bigskip \smallskip

 \qquad Veselina R. Tavkova \par
 \qquad Faculty of Mathematics and
Informatics, University of Plovdiv Paisii Hilendarski\par
 \qquad Department of Algebra and Geometry\par
 \qquad 24 Tzar Asen St, 4000 Plovdiv,
Bulgaria\par
 \qquad {\tt vtavkova@uni-plovdiv.bg}
 }


\begin{thebibliography}{999999}


\bibitem{AdatMiya77}
\textsc{T. Adati {\rm and} T. Miyazawa}: \textit{On paracontact Riemannian manifolds}.
TRU Mathematics  {\bf 13} (1977), 27--39.

\bibitem{CrFoGa96}
\textsc{V. Cruceanu, P. Fortuny {\rm and} P. M. Gadea}: \textit{A survey on
paracomplex geometry}. Rocky Mountain Journal of Mathematics {\bf 26} (1996),
83--115.

\bibitem{HM}
\textsc{H. Manev}: \textit{ On the structure tensors of almost contact  B-\-metric
manifolds}. Filo\-mat {\bf 29} (2015), 427--436.

\bibitem{HManMek}
\textsc{H. Manev {\rm and}  D. Mekerov}: \textit{Lie groups as 3-dimensional almost
contact B-metric manifolds}. Journal of Geometry {\bf 106} (2015), 229--242.

\bibitem{MI1}
\textsc{M. Manev {\rm and} M. Ivanova}: \textit{A classification of the torsion
tensors on almost contact manifolds with
B-metric}. Central European Journal of Mathematics {\bf 12} (2014), 1416--1432.

\bibitem{ManSta01}
\textsc{M. Manev {\rm and} M. Staikova}: \textit{On almost paracontact Riemannian
manifolds
of type $(n,n)$}. Journal of Geometry {\bf 72} (2001), 108--114.

\bibitem{NakZam}
\textsc{G. Nakova {\rm and} S. Zamkovoy}: \textit{Eleven classes of almost
paracontact
manifolds with semi-Riemannian metric of $(n+1,n)$}.  In: Recent Progress in
Differential
Geo\-met\-ry and its Related Fields (T. Adachi, H. Hashimoto, M. Hristov, eds.),
World Scientific Publ.,
Singapore, 2012, pp. 119--136.

\bibitem{Ol}
\textsc{Z. Olszak}: \textit{On almost cosymplectic manifolds}. Kodai Mathematical
Journal {\bf 4} (1981), 239--250.

\bibitem{Sa80}
\textsc{S. Sasaki}: \textit{On paracontact Riemannian manifolds}.
TRU Mathematics {\bf 16} (1980), 75--86.

\bibitem{Sato76}
\textsc{I. Sat\={o}}: \textit{On a structure similar to the almost contact
structure}.
Tensor (N.S.) {\bf 30} (1976), 219--224.

\bibitem{Sato77}
\textsc{I. Sat\={o}}: \textit{On a structure similar to almost contact structure II}.
Tensor (N.S.) {\bf 31} (1977), 199--205.

\bibitem{Sato78}
\textsc{I. Sat\={o}}: \textit{On a Riemannian manifold admitting a certain vector
field}.
Kodai Mathematical Seminar Reports {\bf 29} (1978), 250--260.

\bibitem{SatoMats79}
\textsc{I. Sat\={o} {\rm and} K. Matsumoto}: \textit{On P-Sasakian manifolds
satisfying certain conditions}.
Tensor (N.S.) {\bf 33} (1979), 173--178.

\bibitem{ZamNak}
\textsc{S. Zamkovoy {\rm and} G. Nakova}: \textit{The decomposition of almost
paracontact metric manifolds in eleven classes revisited}.
Journal of Geo\-metry {\bf 109:18} (2018), https://doi.org/10.1007/s00022-018-0423-5

\end{thebibliography}
 \end{document}